\newtheorem{theorem}{Theorem}[section]
\newtheorem{lemma}{Lemma}[section]
\newtheorem{remark}{Remark}[section]
\numberwithin{equation}{section}
\newcommand{\bb}{\boldsymbol}
\begin{document}
\begin{frontmatter}
	\title{A novel locking-free virtual element method for linear elasticity problems}
	\cortext[mycorrespondingauthor]{Corresponding author}
	\author[sjtu1]{Jianguo Huang\corref{mycorrespondingauthor}}
	\ead{jghuang@sjtu.edu.cn}
	\fntext[hjgfootnote]{The work of this author was partially supported by NSFC (Grant No.\ 12071289).}
	\author[sjtu1]{Sen Lin}
	\ead{sjtu\_Einsteinlin7@sjtu.edu.cn}
	\author[sjtu1,sjtu2]{Yue Yu}
	\ead{terenceyuyue@sjtu.edu.cn}
	\address[sjtu1]{School of Mathematical Sciences, and MOE-LSC,
		Shanghai Jiao Tong University, Shanghai 200240, China}
	\address[sjtu2]{Institute of Natural Sciences,
		Shanghai Jiao Tong University, Shanghai 200240, China}
	
	\begin{abstract}
	This paper devises a novel lowest-order conforming virtual element method (VEM) for planar linear elasticity with the pure displacement/traction boundary condition.
	The main trick is to view a generic polygon $K$ as a new one $\widetilde{K}$ with additional vertices consisting of interior points on edges of $K$,
	so that the discrete admissible space is taken as the $V_1$ type virtual element space related to the partition $\{\widetilde{K}\}$ instead of $\{K\}$.
	The method is shown to be uniformly convergent with the optimal rates both in $H^1$ and $L^2$ norms with respect to the Lam\'{e} constant $\lambda$.
	Numerical tests are presented to illustrate the good performance of the proposed VEM and confirm the theoretical results.
	\end{abstract}

	\begin{keyword}
		Virtual element method, Linear elasticity, Locking-free, Numerical tests.
	\end{keyword}		
	
\end{frontmatter}

\section{Introduction}
\label{Sec1}
Robust numerical solution of parameter dependent partial differential equations (PDEs) is a very important topic in scientific computing.
A typical example is the linear elasticity problem which involves the Lam\'{e} constants $\lambda$ and $\mu$.
If $\lambda$ is very large, the material becomes nearly incompressible, which will lead to the so-called volume locking phenomenon
when the Courant element is used for discretization (cf. \cite{Babuska-Szabo-1982}).
Because of this, a huge work has been developed to overcome this difficulty.
We refer the reader to \cite{Babuska-Suri-1992a,Babuska-Suri-1992b,BoffiBrezziFortin2013,Brenner-Sung-1992,Carstensen-Schedensack-2015,
Guo-Huang-2020,Harper-Wang-Liu-Tavener-Zhang-2020,Huang-Huang-2013,Wang-Wang-Wang-Zhang-2016,Zhang-1997}
and the references therein for details along this line.

On the other hand, the virtual element method (VEM) is a newly proposed numerical method for solving PDEs in recent years,
which can be viewed as a generalized finite element method allowing for the use of polytopal meshes (polygons or polyhedra) of very general shape.
It was first proposed and analyzed in \cite{Ahmad-Alsaedi-Brezzi-Marini-2013,Beirao-Brezzi-Cangiani-Manzini-2013, Beirao-Brezzi-Marini-Russo-2014}
and immediately attracted much attention from the research community due to its advantages in handling problems with complex geometries,
high-regularity solutions (cf. \cite{Chen-Huang-2020,Huang-2020}).
As far as we know, there have also developed some locking free VEMs for the linear elasticity problem.
Let $k$ denote the order of the virtual element used. The conforming VEMs for this problem were first studied in \cite{Beirao-Brezzi-Marini-2013}.
However, the condition $k\geq2$ is required so as to produce locking free optimal error estimates in the $H^1$ norm.
More recently, a lowest-order locking-free VEM was devised in \cite{Tang-Liu-Zhang-Feng-2020} to remedy this disadvantage
by modifying the $W_1$ type virtual element technically so that the uniform inf-sup condition holds. This method can be regarded as
the extension of the well-known Bernardi-Raugel finite element method (cf. \cite{Bernardi-Raugel-1985}) in general polygonal meshes.
In \cite{Zhang-Zhao-Yang-Chen-2019}, some nonconforming VEMs were proposed for attacking the previous problem in two and three dimensions.
It was shown in this paper that if $k\geq1$ (resp. $k\geq2$), the method is locking-free for the pure displacement (resp. traction) problem.
We mention in passing that there are some other VEMs discussing numerical solution of elasticity problems in different aspects; see, e.g.,
\cite{Adak-Nataraj-2020,Artioli-Miranda-Lovadina-Patruno-2017,Artioli-Miranda-Lovadina-Patruno-2018,
	Artioli-Miranda-Lovadina-Patruno-2020,Berbatov-Lazarov-Jivkov-2021,Caceres-Gatica-Sequeira-2019,
	Dassi-Lovadina-Visinoni-2020,Dhanush-Natarajan-2019,Gain-Talischi-Paulino-2014,Kwak-Park-2022,
	Mora-Rivera-2020,Reddy-Huyssteen-2019,Zhang-Feng-2018}.

In this paper, we are concerned with proposing and analyzing a novel lowest-order conforming locking-free VEM
for the pure dispalcement/traction problem, by means of a significant feature of VEMs. The main ideas behind the method include:
\begin{enumerate}
	\item
	For a generic polygonal element $K$ with $N_K$ edges, denoted by $\{z_i\}_{i=1}^{N_K}$ its vertices which are numbered in a counter-clockwise order
	and by $e_i=\overline{z_iz_{i+1}}$ the edge connecting $z_i$ to $z_{i+1}$, where $z_{N_K+1}:=z_1$.
	Then we construct an auxiliary polygon $\widetilde{K}$ (see Fig. \ref{Fig1_Polygon_Refine_Type3}),
	which has the same geometric shape as $K$ but has more vertices. Precisely speaking,
	besides the vertices of $K$,  $\widetilde{K}$ has additional vertices formed by the internal points $\{m_i\}_{i=1}^{N_K}$ of edges of $K$,
	which are chosen such that $|\overline{z_im_i}|=\alpha |\overline{z_iz_{i+1}}|$, where $\alpha\in (0,1)$ is any fixed constant.
	It is a significant advantage of VEMs that one can view an interior point of an edge as an additional vertex,
	so that the difficulty of hanging nodes during adaptivity can be overcome very conveniently (cf. \cite{Huang-Lin-2021}).
	But here we use the technique to construct locking free VEMs.
	
	\item
	Based on the $V_1$ type virtual element space with respect the partition $\{\widetilde{K}\}$ instead of the usual one $\{K\}$,
	a locking free VEM is naturally devised using the ideas in \cite{Beirao-Brezzi-Marini-2013} for $k\geq2$.
\end{enumerate}

Under the standard mesh assumption {\bf C0} (see Subsection \ref{Sec3-1}), we can show the proposed method is locking free
and has the optimal convergence order by invoking a novel interpolation operator which satisfies the required commutative property
and admits the optimal error estimates. We offer several numerical tests to illustrate the performance of the proposed method
and confirm its theoretical predictions.

We end this section by introducing some notations for later requirement.
We use standard notations for Sobolev spaces and norms (see \cite{Adams1975} for more details).
Let $D$ be any open subset of $\Omega$ and $|D|$ denote the area of $D$. Given a non-negative integer $s$,
we denote the standard Sobolev spaces on $D$ by $H^s(D)$ with norm $\|\cdot\|_{s,D}$ and semi-norm $|\cdot|_{s,D}$,
and denote the $L^2$-inner product on $D$ by $(\cdot,\,\cdot)_{D}$.
By convention, $H^0(D)=L^2(D)$.
Let $\mathbb{P}_s(D)$ be the set of polynomials of degree less than $s$ on $D$.
Moreover, we omit the subscript when $D=\Omega$.
For the vector-valued functions or spaces, we use the bold symbols, such as
$\bb{v}$, $\bb{L}^2(\Omega)$,  $\bb{H}^s(\Omega)$,
$\bb{H}_0^s(\Omega)$, etc.
Let $\boldsymbol{v}=(v_1,\,v_2)^{\intercal}$ and $\boldsymbol{\tau}=[\tau_{ij}]_{2\times2}$ be functions of two variables.
We define
\begin{align*}
{\rm{div}}~\bb{v} = \partial_1{v}_1 + \partial_2{v}_2, \quad {\rm{rot}}~\bb{v}= \partial_1{v}_2 - \partial_2{v}_1,
\quad \text{and} \quad
\boldsymbol{\rm{div}~\boldsymbol{\tau}}=(\partial_1{\tau_{11}}+\partial_2{\tau_{12}},\partial_1{\tau_{21}}+\partial_2{\tau_{22}})^{\intercal}.
\end{align*}
For any two quantities $a$ and $b$, ``$a\lesssim b$" indicates ``$a \leq Cb$"
with the hidden generic positive constant $C$ independent of $\lambda$ and the mesh size,
which may take different values at different occurrences, and $a\eqsim b$ shows $a\lesssim b\lesssim a$.

\section{The linear elasticity problem}
\label{Sec2}
Let $\Omega\subset\mathbb{R}^2$ be a convex polygonal domain with the boundary $\partial\Omega$.
Given an applied force $\bb{f}\in\boldsymbol{L}^2(\Omega)$, the linear elasticity problem is
to find the displacement vector $\bb{u}=(u_1,\,u_2)^{\intercal}$ such that
\begin{equation}
\label{Elasticity_Eq}
\begin{cases}
	- \boldsymbol{\rm div}~\bb{\sigma}(\bb{u}) = \bb{f} & \text{in} ~~\Omega, \\
	\bb{u}  = \bb{0}  & \text{on} ~~\partial\Omega,
\end{cases}
\end{equation}
where $\bb{\varepsilon}=[\varepsilon_{ij}]_{2\times2}$, $\varepsilon_{ij}(\boldsymbol{u})= \frac{1}{2}(\partial_i u_j + \partial_j u_i)$,
and
\[
	\bb{\sigma} = 2\mu\bb{\varepsilon}(\bb{u}) + \lambda({\rm div}~\bb{u})\bb{I}.
\]
Here, $\mu$ and $\lambda$ are the Lam\'{e} constants satisfying $\mu\in[\mu_1,\mu_2]$
with $0<\mu_1<\mu_2$ and $\lambda\in(0,\infty)$, and $\bb{I}$ is the identity matrix.

The variational formulation of \eqref{Elasticity_Eq} is
to find $\bb{u}\in\bb{V}=\bb{H}_0^1(\Omega)$ such that
\begin{equation}
\label{Elasticity_Eq_Var}
a(\bb{u},\,\bb{v}) = (\bb{f},\,\bb{v}),
\quad \bb{v} \in \bb{V},
\end{equation}
where
\begin{equation}
\label{Elasticity_Eq_Bilinear}
a(\bb{w},\,\bb{v})
= 2\mu (\bb{\varepsilon}(\bb{w}),\,\bb{\varepsilon}(\bb{v}))
+ \lambda ({\rm div}~\bb{w},\,{\rm div}~\bb{v}),
\quad \bb{w},\bb{v} \in \bb{V},
\end{equation}
and
\[
(\bb{f},\,\bb{v})
= \int_{\Omega} \bb{f}\bb{v}\,\mathrm{d}x,
\quad \bb{v} \in \bb{V}.
\]

By the well-known Korn inequality (cf. \cite{BrennerScott2008}):
\begin{align}
	\| \boldsymbol{v} \|_{1} \lesssim \| \bb{\varepsilon}(\boldsymbol{v}) \|_{0},
	\quad  \boldsymbol{v}\in \boldsymbol{H}_0^1(\Omega),
	\label{First_Korn_Iq}
\end{align}
it is easy to obtain the boundedness and coercivity of $a(\cdot,\,\cdot)$:
\begin{align*}
	a(\bb{w},\,\bb{v}) & \lesssim (1+\lambda)|\bb{w}|_1|\bb{v}|_1,
	\quad \bb{w},\bb{v}\in \bb{V},  \\
	| \bb{v} |_1^2 & \lesssim a(\bb{v},\,\bb{v}),
	\quad \bb{v}\in \bb{V},
\end{align*}
which imply that the problem \eqref{Elasticity_Eq} has a unique solution by the Lax-Milgram lemma.

Since the first term of $a(\cdot,\cdot)$ (cf. \eqref{Elasticity_Eq_Bilinear}) is related to $\mu$, we write
\[
	a_\mu( \bb{w}, \bb{v}) = (\bb{\varepsilon}(\bb{w}),\,\bb{\varepsilon}(\bb{v}))
\]
for later uses.

It is shown in \cite{Brenner-Sung-1992} that the following regularity estimate holds for the solution $\bb{u}$ of \eqref{Elasticity_Eq}:
\begin{equation}
	\label{Elasticity_Eq_Reglar_Est}
	\|\bb{u}\|_2 + \lambda\|{\rm div}~\bb{u}\|_1 \lesssim \|\bb{f}\|_0.
\end{equation}

\section{Virtual element methods for the pure displacement problem }
\label{Sec3}
In this section, by means of a mesh refinement trick in VEMs,
we propose a novel lowest order locking-free conforming virtual element method on the fine mesh.

\subsection{Mesh assumption and the virtual element space}
\label{Sec3-1}
Let $\{\mathcal{T}_h\}$ be a family of decompositions of $\Omega$ into polygonal elements.
The generic element is denoted by $K$ with diameter $h_K={\rm diam}(K)$.
For clarity, we work on meshes satisfying the following condition (cf. \cite{Beirao-Brezzi-Cangiani-Manzini-2013}):
\begin{enumerate}
	\item [{\bf C0}.]
	For each element $K$, there exists positive constants $\gamma_1$ and $\gamma_2$ independent of $h_K$ such that
	\begin{itemize}
		\item
		$K$ is star-shaped with respect to a disc in $K$ with radius $\geq \gamma_1 {h_K}$;
		
		\item
		the distance between any two vertices of $K$ is $\geq\gamma_2 h_K$.
	\end{itemize}
\end{enumerate}

\begin{figure}[H]
	\centering
	\includegraphics[scale=0.27]{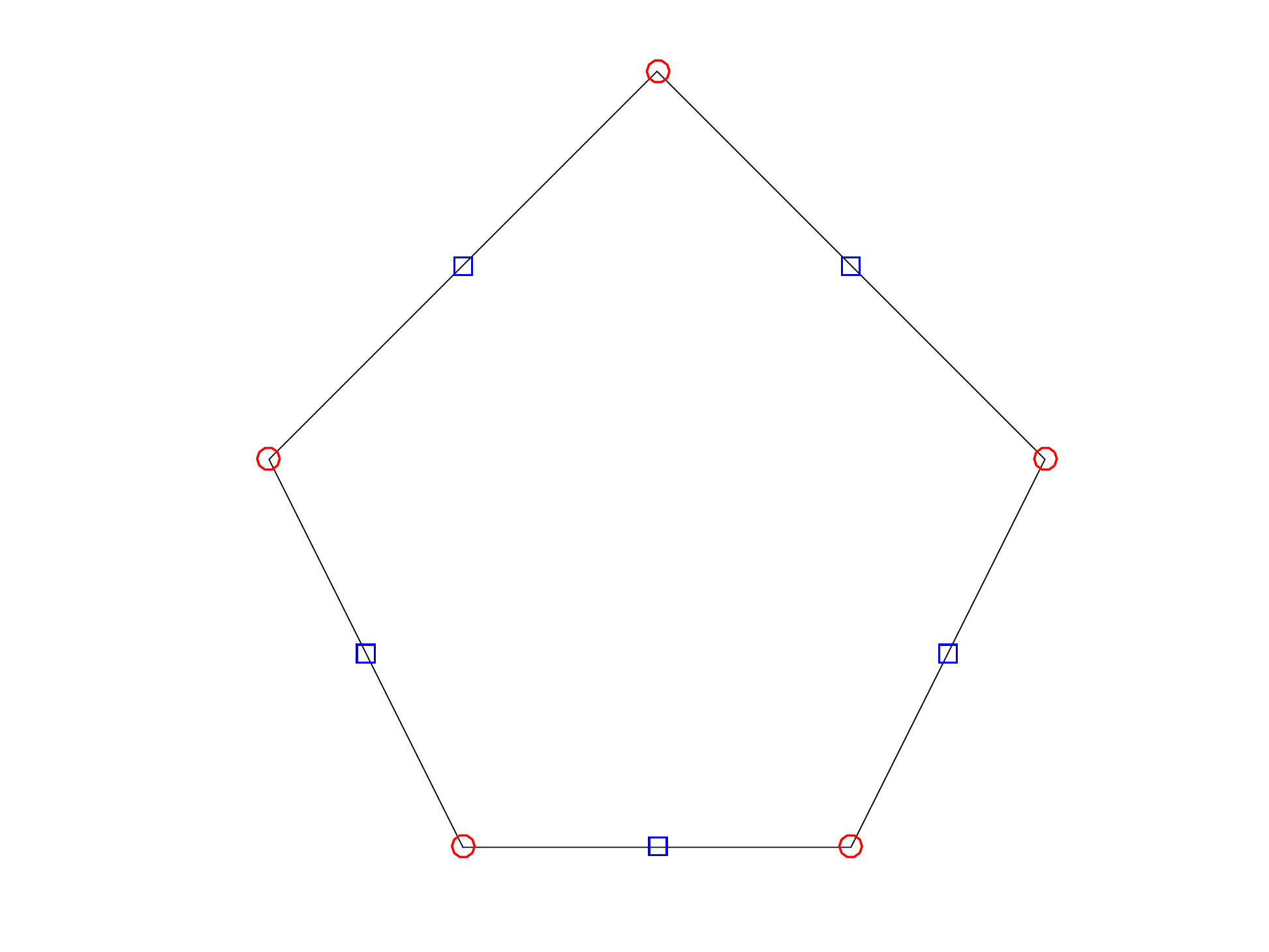}
	\caption{
		The auxiliary polygon $\widetilde{K}$ satisfying the mesh assumption {\bf C0}.
		The polygonal elements of the fine mesh $\mathcal{T}_h^{*}$ and the original mesh $\mathcal{T}_h$ are denoted by $\widetilde{K}$ and $K$, respectively.
		The red circle points denote the original vertices, the blue square points denote the added vertices.
	}
	\label{Fig1_Polygon_Refine_Type3}
\end{figure}

Due to the large flexibility of the meshes in VEMs, under the above assumption,
one easily finds that the mesh can be refined by taking the internal point $m_i$ of each edge $e_i$ of every element $K$
as a new vertex (see Fig.~\ref{Fig1_Polygon_Refine_Type3}).
For the new edge $|\overline{z_im_i}|$, we assume $|\overline{z_im_i}|=\alpha|e_i|$ for $i=1,2,\ldots,N_K$ for a given constant $\alpha\in(0,1)$.
The resulting fine mesh will be denoted by $\mathcal{T}_h^{*}$ with the generic element given by $\widetilde{K}$.
It's obvious that $\mathcal{T}_h^{*}$ still satisfies the assumption {\bf C0} thought the generic constants may change.
Here, $h=\max_{\widetilde{K}\in\mathcal{T}_h^{*}}h_{\widetilde{K}}$ with $h_{\widetilde{K}}$ being the diameter of $\widetilde{K}$.
It is evident that $h_K\eqsim h_{\widetilde{K}}$.

In this paper, we consider the lowest-order conforming virtual element space (VES) defined on the fine mesh $\mathcal{T}_h^*$.
To this end, we briefly review the construction method mentioned in \cite{Beirao-Brezzi-Cangiani-Manzini-2013}.
The local scalar virtual element space on $\widetilde{K}$ is defined as
\begin{equation}
\label{Elasticity_Eq_Local_VES}
V_1(\widetilde{K}) =\big\{
v \in H^1(\widetilde{K}) ~|~ \Delta v = 0 ~~\mbox{in}~~\widetilde{K},
~ v |_e \in \mathbb{P}_1(e),\quad e \subset \partial{\widetilde{K}}\big\}.
\end{equation}
The local virtual element space for linear elasticity is then given by
the tensor product space $\bb{V}_1(\widetilde{K}) = (V_1(\widetilde{K}))^2$.
The corresponding local degrees of freedoms (DOFs) are:
\begin{itemize}
	\item
	$\chi_i\in \bb{\chi}$:  the values at vertices of $\widetilde{K}$,
	\[
	\chi_i(\bb{v}) = \bb{v}(a_i),
	\quad \mbox{where~$a_i=z_i$~or~$m_i$~is the vertex of ~$\widetilde{K}$}.
	\]
\end{itemize}

In what follows, denote by $\bb{V}_h$ the global VES, which is defined elementwise and required that
the degrees of freedom are continuous through the interior vertices while zero for the boundary DOFs,
i.e.,
\begin{align*}
\bb{V}_h = \big\{
\bb{v}\in \bb{H}_0^1(\Omega) ~|~ \bb{v}|_{\widetilde{K}} \in \bb{V}_1(\widetilde{K}), \quad \widetilde{K}\in\mathcal{T}_h^{*}
\big\}.
\end{align*}

\subsection{The virtual element method}
\label{Sec3-2}
As in \cite{Beirao-Brezzi-Marini-2013},
we first introduce the elliptic projection ${\Pi}_1^{\widetilde{K}} \colon \bb{H}^1(\widetilde{K})\to (\mathbb{P}_1(\widetilde{K}))^2$
for dealing with the first term of \eqref{Elasticity_Eq_Bilinear}, which is given by
	\begin{equation}
	\label{Pi1E_Proj}
	\begin{cases}
		(\bb{\varepsilon}({\Pi}_1^{\widetilde{K}}\bb{v}),\,\bb{\varepsilon}(\bb{p}))_{\widetilde{K}}
		= (\bb{\varepsilon}(\bb{v}),\,\bb{\varepsilon}(\bb{p}))_{\widetilde{K}},
		\quad \bb{p}\in (\mathbb{P}_1(\widetilde{K}))^2, \\
	\int_{\partial{\widetilde{K}}}{\Pi}_1^{\widetilde{K}}\bb{v}\,\mathrm{d}s
		= \int_{\partial{\widetilde{K}}}\bb{v}\,\mathrm{d}s, \\
	\int_{\widetilde{K}}{\rm{rot}}~{\Pi}_1^{\widetilde{K}} \bb{v}\,\mathrm{d}x
		= \int_{\widetilde{K}}{\rm{rot}}~\bb{v}\,\mathrm{d}x.
	\end{cases}
	\end{equation}
One can check that the elliptic projection ${\Pi}_1^{\widetilde{K}}$ can be computed by the given DOFs.
For simplicity, we write $\Pi_1$ for the related elementwise defined global operator.

Next, Let $\Pi_0^K$ be the usual $L^2$ orthogonal projection from $L^2(K)$ into the constant space over $K$. In other words,
for all $v\in L^2(K)$, $\Pi_0^K v=\int_K v \mathrm{d}x/|K|$. This definition naturally applies to a $L^2$ vector valued function.
For all $\bb{v}\in\bb{H}^1(K)$, by integration by parts,
\[
\int_K {\rm div}~\bb{v}\,\mathrm{d}x =  \int_{\partial{K}} \bb{v}\cdot \bb{n}_K\, \mathrm{d}s
\quad \mbox{and} \quad
\int_K {\rm rot}~\bb{v}\,\mathrm{d}x =  \int_{\partial{K}} \bb{v}\cdot \bb{t}_K\, \mathrm{d}s,
\]
where $\bb{n}_K=(n_1,n_2)^{\intercal}$ and $\bb{t}_K=(-n_2,n_1)^{\intercal}$ denote the unit normal and tangent vector
to $\partial{K}$ determined by the right hand side, respectively. Hence, for all $\bb{v} \in \bb{V}_1(K)$,
we can compute $\Pi_0^K{\rm div}~\bb{v}$  and $\Pi_0^K{\rm rot}~\bb{v}$ only using the function values at the vertices of $K$.
Therefore, we treat the second term in \eqref{Elasticity_Eq_Bilinear} using the exact method for the case $k\geq 2$ given in \cite{Beirao-Brezzi-Marini-2013}.

With the help of the above projections, we construct the local computable bilinear form by
\begin{align}
a_h^{\widetilde{K}} (\bb{w}_h,\,\bb{v}_h)
= 2 \mu  a_{\mu,h}^{\widetilde{K}} (\bb{w}_h,\,\bb{v}_h)
+\lambda(\Pi_0^K{\rm div}~\bb{w}_h,\,\Pi_0^K{\rm div}~\bb{v}_h)_K,
\quad \bb{w}_h, \bb{v}_h \in \bb{V}_1(\widetilde{K}),
\label{Elasticity_Eq_Local_Discrete_Bilinear}
\end{align}
where
\begin{align*}
a_{\mu,h}^{\widetilde{K}}(\bb{w}_h,\,\bb{v}_h)
= (\bb{\varepsilon}(\Pi_1^{\widetilde{K}}\bb{w}_h),\,
\bb{\varepsilon}(\Pi_1^{\widetilde{K}}\bb{v}_h))_{\widetilde{K}}
+S^{\widetilde{K}}(\bb{w}_h-\Pi_1^{\widetilde{K}}\bb{w}_h,\,\bb{v}_h-\Pi_1^{\widetilde{K}}\bb{v}_h)
\end{align*}
and
\[
	S^{\widetilde{K}}(\bb{w}_h,\,\bb{v}_h) = \bb{\chi}(\bb{w}_h) \cdot \bb{\chi}(\bb{v}_h)
	= \sum\limits_{i=1}^{N_{\widetilde{K}}} \chi_i(\bb{w}_h) \cdot \chi_i(\bb{v}_h)
\]
with $N_{\widetilde{K}}=2N_K$. 

The locking-free conforming VEM of the variational problem \eqref{Elasticity_Eq_Var} is
to find $\bb{u}_h\in\bb{V}_h$ such that
\begin{equation}
\label{Elasticity_Eq_VEM}
a_h(\bb{u}_h,\,\bb{v}_h) = \langle \bb{f}_h,\,\bb{v}_h \rangle, \quad  \bb{v}_h\in\bb{V}_h,
\end{equation}
where
\[
a_h(\bb{u}_h,\,\bb{v}_h) = \sum\limits_{K\in \mathcal{T}_h}a_h^K(\bb{u}_h,\,\bb{v}_h),
\]
and the approximation of the right hand side is given by
\begin{equation}
\label{Elasticity_Eq_VEM_rhs}
\langle \bb{f}_h,\,\bb{v}_h \rangle = \sum_{\widetilde{K}\in\mathcal{T}_h^{*}}(\bb{f}_h,\, \hat{\bb{v}}_h)_{\widetilde{K}},
\end{equation}
where $\bb{f}_h|_{\widetilde{K}} = \Pi_0^{\widetilde{K}}\bb{f}$ and
\[
	\hat{\bb{v}}_h = \frac{1}{|\partial{\widetilde{K}}|} \int_{\partial\widetilde{K}} \bb{v}_h \,\mathrm{d}s.
\]

\begin{remark}
	\label{Remk1}
	It is worth mentioning that the proposed lowest-order conforming virtual element on triangular meshes is not the well-known Courant element.
\end{remark}

\subsection{Stability of the virtual element method}
\label{Sec3-3}
We first recall a classic Korn's inequality with respect to a star-shaped domain.
\begin{lemma} [\cite{Brenner-Sung-1992,Costabel-Dauge-2015}]
	\label{Lem1_Rot_Korn_Iq}
	Let $D\subset \mathbb{R}^2$ be a bounded domain of diameter $h$, which is star-shaped with respect to a disc of radius $\rho$.
	Then for any $\boldsymbol v\in \boldsymbol H^1(D)$ satisfying $\int_D {\rm {rot}}~\bb{v}\,\mathrm{d}x=0$,
	there holds
	\begin{align}
	| \boldsymbol{v} |_{1,D} \lesssim \| \boldsymbol{\varepsilon}(\boldsymbol{v}) \|_{0,D},	
	\nonumber
	\end{align}
	where the hidden constant $C$ only depends on the aspect ratio $h/\rho$.
\end{lemma}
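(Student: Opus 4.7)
The plan is to reduce the stated inequality to a scalar Poincar\'{e}-type bound for ${\rm rot}~\bb{v}$ via the standard symmetric/antisymmetric splitting of $\nabla \bb{v}$, and then to control ${\rm rot}~\bb{v}$ in terms of $\bb{\varepsilon}(\bb{v})$ by invoking a Ne\v{c}as-type negative-norm estimate whose constant is explicitly tied to the shape parameter $h/\rho$ of a star-shaped domain.

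First, I would split $\nabla \bb{v}$ into its symmetric part $\bb{\varepsilon}(\bb{v})$ and its antisymmetric part. In two dimensions the latter is fully determined by the single scalar $\tfrac{1}{2}{\rm rot}~\bb{v}$, and the Frobenius norm decomposes orthogonally, giving
\begin{equation*}
|\bb{v}|_{1,D}^2 = \|\bb{\varepsilon}(\bb{v})\|_{0,D}^2 + \tfrac{1}{2}\|{\rm rot}~\bb{v}\|_{0,D}^2.
\end{equation*}
Hence the lemma reduces to the scalar inequality $\|{\rm rot}~\bb{v}\|_{0,D} \lesssim \|\bb{\varepsilon}(\bb{v})\|_{0,D}$ with a constant depending only on $h/\rho$.

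Second, I would relate $\nabla({\rm rot}~\bb{v})$ to the components of $\bb{\varepsilon}(\bb{v})$. A one-line manipulation of $2\varepsilon_{ij}(\bb{v}) = \partial_i v_j + \partial_j v_i$ gives the identity
\begin{equation*}
\partial_k(\partial_i v_j - \partial_j v_i) = 2\bigl(\partial_i \varepsilon_{jk}(\bb{v}) - \partial_j \varepsilon_{ik}(\bb{v})\bigr),
\end{equation*}
which shows that every component of $\nabla({\rm rot}~\bb{v})$ is a linear combination of first partial derivatives of entries of $\bb{\varepsilon}(\bb{v})$. Pairing against a test function in $H^1_0(D)$ and integrating by parts once then yields $\|\nabla({\rm rot}~\bb{v})\|_{-1,D} \lesssim \|\bb{\varepsilon}(\bb{v})\|_{0,D}$ with an absolute constant.

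Third, set $\eta = {\rm rot}~\bb{v}$. The hypothesis $\int_D \eta\,\mathrm{d}x = 0$ lets me invoke the Ne\v{c}as inequality on the star-shaped domain $D$: for any $q \in L^2(D)$ of zero mean,
\begin{equation*}
\|q\|_{0,D} \lesssim \|\nabla q\|_{-1,D},
\end{equation*}
with the hidden constant depending only on the aspect ratio $h/\rho$. Combined with the bound of the previous step, this produces $\|\eta\|_{0,D} \lesssim \|\bb{\varepsilon}(\bb{v})\|_{0,D}$ and closes the argument.

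The hard part is the third step. Quantifying the Ne\v{c}as inequality so that the constant is controlled purely by $h/\rho$, with no hidden dependence on finer features of $\partial D$, is not elementary; the standard route constructs a Bogovskii-type right inverse of the divergence on star-shaped domains and then argues by duality. This shape-explicit geometric content is precisely what the cited works of Brenner--Sung and Costabel--Dauge package, and I would invoke their result as a black box rather than reprove it here.
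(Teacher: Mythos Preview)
The paper does not prove this lemma; it is stated with citations to \cite{Brenner-Sung-1992,Costabel-Dauge-2015} and used as a black box. Your argument is correct and in fact reconstructs the standard route behind those references: the orthogonal splitting $|\bb{v}|_{1,D}^2=\|\bb{\varepsilon}(\bb{v})\|_{0,D}^2+\tfrac12\|{\rm rot}~\bb{v}\|_{0,D}^2$ reduces the claim to bounding $\|{\rm rot}~\bb{v}\|_{0,D}$; the identity $\partial_k(\partial_i v_j-\partial_j v_i)=2(\partial_i\varepsilon_{jk}-\partial_j\varepsilon_{ik})$ gives $\|\nabla({\rm rot}~\bb{v})\|_{-1,D}\lesssim\|\bb{\varepsilon}(\bb{v})\|_{0,D}$; and the mean-zero hypothesis on ${\rm rot}~\bb{v}$ lets the Ne\v{c}as/Lions inequality close the loop. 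You are also right that the only nontrivial, shape-dependent ingredient is the Ne\v{c}as constant, and that quantifying it purely through $h/\rho$ on star-shaped domains is exactly the content supplied by Costabel--Dauge (via the Horgan--Payne/Bogovski\u{\i} circle of ideas). So your proposal is sound and matches what the cited literature packages; there is nothing the paper adds beyond the citation.
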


With the help of the above lemmas, we are able to derive the norm equivalence and the stability condition
for the proposed virtual element method \eqref{Elasticity_Eq_VEM} described as follows.
\begin{theorem}
\label{Thm1_Elasticity_Norm_Stab}
Under the mesh assumption \textbf{C0}, for all $\bb{v}_h \in \bb{V}_h$ there hold
\begin{align}
\| \bb{\varepsilon}(\bb{v}_h - \Pi_1^{\widetilde{K}} \bb{v}_h) \|_{0,\widetilde{K}} \eqsim \|\bb{\chi}(\bb{v}_h - \Pi_1^{\widetilde{K}} \bb{v}_h)\|_{l^2},
\quad \widetilde{K} \in \mathcal{T}_h^{*},
\label{Elasticity_Norm_Equiv}
\end{align}
and
\begin{align}
a_{\mu,h}^{\widetilde{K}}(\bb{v}_h,\,\bb{v}_h) \eqsim a_{\mu}^{\widetilde{K}}(\bb{v}_h,\,\bb{v}_h),
\quad \widetilde{K} \in \mathcal{T}_h^{*} ,
\label{Elasticity_Stability_Cond}
\end{align}
where $a_{\mu}^{\widetilde{K}}(\bb{v}_h,\,\bb{v}_h)$ is the local conterpart of $a_{\mu}(\bb{v}_h,\,\bb{v}_h)$.
\end{theorem}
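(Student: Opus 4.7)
The plan is to derive the stability estimate \eqref{Elasticity_Stability_Cond} as an immediate consequence of the norm equivalence \eqref{Elasticity_Norm_Equiv}, so the bulk of the work lies in establishing the latter. The first defining relation in \eqref{Pi1E_Proj}, tested against the constant tensor $\bb{\varepsilon}(\Pi_1^{\widetilde{K}}\bb{v}_h)$, furnishes the $L^2$-orthogonal splitting $\|\bb{\varepsilon}(\bb{v}_h)\|_{0,\widetilde{K}}^2 = \|\bb{\varepsilon}(\Pi_1^{\widetilde{K}}\bb{v}_h)\|_{0,\widetilde{K}}^2 + \|\bb{\varepsilon}(\bb{v}_h-\Pi_1^{\widetilde{K}}\bb{v}_h)\|_{0,\widetilde{K}}^2$. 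Combining this splitting with $S^{\widetilde{K}}(\bb{w},\bb{w}) = \|\bb{\chi}(\bb{w})\|_{l^2}^2$ and with \eqref{Elasticity_Norm_Equiv} applied to $\bb{w}_h := \bb{v}_h - \Pi_1^{\widetilde{K}}\bb{v}_h$ yields \eqref{Elasticity_Stability_Cond} at once.

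To prove \eqref{Elasticity_Norm_Equiv}, I set $\bb{w}_h = \bb{v}_h - \Pi_1^{\widetilde{K}}\bb{v}_h$ and observe that the last two relations in \eqref{Pi1E_Proj} force $\int_{\partial\widetilde{K}}\bb{w}_h\,\mathrm{d}s = \bb{0}$ and $\int_{\widetilde{K}} {\rm rot}~\bb{w}_h\,\mathrm{d}x = 0$. Assumption \textbf{C0} guarantees that $\widetilde{K}$ is star-shaped with uniformly bounded aspect ratio, so Lemma~\ref{Lem1_Rot_Korn_Iq} applies and gives $|\bb{w}_h|_{1,\widetilde{K}} \eqsim \|\bb{\varepsilon}(\bb{w}_h)\|_{0,\widetilde{K}}$. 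It therefore suffices to establish $|\bb{w}_h|_{1,\widetilde{K}} \eqsim \|\bb{\chi}(\bb{w}_h)\|_{l^2}$.

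For the upper bound $\|\bb{\chi}(\bb{w}_h)\|_{l^2} \lesssim |\bb{w}_h|_{1,\widetilde{K}}$, I would chain three standard estimates under \textbf{C0}: (i) the scaled multiplicative trace inequality $\|\bb{w}_h\|_{0,\partial\widetilde{K}}^2 \lesssim h_{\widetilde{K}}^{-1}\|\bb{w}_h\|_{0,\widetilde{K}}^2 + h_{\widetilde{K}}|\bb{w}_h|_{1,\widetilde{K}}^2$ on the star-shaped $\widetilde{K}$; (ii) the trace-Poincar\'{e} bound $\|\bb{w}_h\|_{0,\widetilde{K}} \lesssim h_{\widetilde{K}}|\bb{w}_h|_{1,\widetilde{K}}$ consequent on $\int_{\partial\widetilde{K}}\bb{w}_h = \bb{0}$; and (iii) the identity $\|\bb{w}_h\|_{0,\partial\widetilde{K}}^2 \eqsim h_{\widetilde{K}}\|\bb{\chi}(\bb{w}_h)\|_{l^2}^2$, valid because $\bb{w}_h$ is piecewise affine along $\partial\widetilde{K}$ and the edges of $\widetilde{K}$ have lengths comparable to $h_{\widetilde{K}}$ by \textbf{C0}. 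For the reverse inequality, I would exploit the component-wise harmonicity of $\bb{w}_h$ in $\widetilde{K}$: as the minimiser of the Dirichlet energy among $H^1$ extensions of its boundary trace, $|\bb{w}_h|_{1,\widetilde{K}}$ is dominated by the $H^1$ seminorm of any convenient competitor. Triangulating $\widetilde{K}$ by rays from a star-centre (whose existence is furnished by \textbf{C0}) and taking the piecewise linear extension that vanishes at the centre produces such a competitor whose $H^1$ seminorm can be estimated sub-triangle by sub-triangle in terms of the vertex values of $\bb{w}_h$.

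The principal obstacle is making the reverse inequality uniform in the number of vertices $N_{\widetilde{K}}$, which can vary without bound across the family $\{\mathcal{T}_h^{*}\}$, precluding a naive reference-element scaling argument. The explicit energy-minimisation competitor above circumvents this issue because its $H^1$ seminorm is a sum over sub-triangles of terms driven by jumps of consecutive vertex values, each estimated by elementary algebra in terms of $\|\bb{\chi}(\bb{w}_h)\|_{l^2}$; all constants depend solely on the shape parameters $\gamma_1,\gamma_2$ of \textbf{C0}, as required.
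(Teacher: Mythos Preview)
Your proof follows the same skeleton as the paper's: reduce \eqref{Elasticity_Norm_Equiv} via Lemma~\ref{Lem1_Rot_Korn_Iq} to the equivalence $|\bb{w}_h|_{1,\widetilde{K}} \eqsim \|\bb{\chi}(\bb{w}_h)\|_{l^2}$ for $\bb{w}_h = \bb{v}_h - \Pi_1^{\widetilde{K}}\bb{v}_h$, and then deduce \eqref{Elasticity_Stability_Cond} from the Pythagorean splitting of $\|\bb{\varepsilon}(\bb{v}_h)\|_{0,\widetilde{K}}^2$. The difference is only in how that seminorm--DOF equivalence is established. The paper passes through the $L^2$ norm, quoting from \cite{Chen-Huang-2018} the equivalence $h_{\widetilde{K}}^{-1}\|\bb{v}\|_{0,\widetilde{K}} \eqsim \|\bb{\chi}(\bb{v})\|_{l^2}$ and the inverse inequality on $\bb{V}_1(\widetilde{K})$, and closing with a Poincar\'{e}--Friedrichs inequality. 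You instead give self-contained arguments: a trace/Poincar\'{e} chain for $\|\bb{\chi}(\bb{w}_h)\|_{l^2} \lesssim |\bb{w}_h|_{1,\widetilde{K}}$, and a harmonic energy-minimisation competitor on a star-centre sub-triangulation for the reverse bound. Both routes are correct; yours is more explicit and essentially re-derives the cited results, while the paper's is shorter at the cost of outsourcing the substance.

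One remark: the ``principal obstacle'' you flag is not actually present. Assumption \textbf{C0} as stated in the paper requires the distance between \emph{any} two vertices of $K$ to be at least $\gamma_2 h_K$; a packing argument in the disc of diameter $h_K$ then bounds $N_K$ (hence $N_{\widetilde{K}} = 2N_K$) uniformly in terms of $\gamma_2$. The paper itself relies on exactly this consequence later (the virtual triangulation $\mathcal{T}_K$ in the proof of Lemma~\ref{Lem3_Elasticity_Interp_Prop} is asserted to be quasi-uniform and shape-regular). So your competitor construction already sits on a shape-regular sub-triangulation with a bounded number of triangles, and no extra care is needed.
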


\begin{proof}
By the definition of $\Pi_1^{\widetilde{K}}$ and Lemma \ref{Lem1_Rot_Korn_Iq}, one has
\begin{equation}
\label{Elasticity_Norm_Equiv_aux1}
\| \bb{\varepsilon}(\bb{v}_h - \Pi_1^{\widetilde{K}} \bb{v}_h) \|_{0,\widetilde{K}} \eqsim  |\bb{v}_h - \Pi_1^{\widetilde{K}} \bb{v}_h|_{1,\widetilde{K}},
\quad \bb{v}_h \in \bb{V}_h.
\end{equation}
According to the norm equivalence in \cite{Chen-Huang-2018},
\[
	h_{\widetilde{K}}^{ - 1}\| \boldsymbol v \|_{0,\widetilde{K}}  \eqsim \|\boldsymbol\chi(\boldsymbol v)\|_{l^2}
\]
for any $\bb{v} \in \bb{V}_1(\widetilde{K})$, and hence
\begin{equation}
\label{Elasticity_Norm_Equiv_aux2}
h_{\widetilde{K}}^{- 1}\| \bb{v}_h - \Pi_1^{\widetilde{K}} \bb{v}_h \|_{0,\widetilde{K}}   \eqsim \|\boldsymbol\chi(\bb{v}_h - \Pi_1^{\widetilde{K}} \bb{v}_h)\|_{l^2}.
\end{equation}
In view of the inverse inequality in \cite{Chen-Huang-2018} and the standard Poincar\'{e}-Friedrichs inequality,
we further obtain
\[
	h_{\widetilde{K}}^{ - 1}\| \bb{v}_h - \Pi_1^{\widetilde{K}} \bb{v}_h \|_{0,\widetilde{K}}
	\eqsim |\bb{v}_h - \Pi_1^{\widetilde{K}} \bb{v}_h|_{1,\widetilde{K}},
\]
which together with \eqref{Elasticity_Norm_Equiv_aux1} and \eqref{Elasticity_Norm_Equiv_aux2} yields the first estimate.

The second estimate follows readily from the first estimate by noting that
\begin{align*}
a_{\mu}^{\widetilde{K}}(\bb{v}_h,\,\bb{v}_h)
	&= \|\bb{\varepsilon}(\bb{v}_h)\|_{0,\widetilde{K}}^2
	= \|\bb{\varepsilon}(\Pi_1^{\widetilde{K}} \bb{v}_h)\|_{0,\widetilde{K}}^2 + \|\bb{\varepsilon}(\bb{v}_h - \Pi_1^{\widetilde{K}} \bb{v}_h)\|_{0,\widetilde{K}}^2.
\end{align*}
\end{proof}

\section{Error analysis}
\label{Sec4}
Following the similar arguments in \cite{Beirao-Brezzi-Marini-2013}, we can derive an abstract lemma for the error analysis.
\begin{lemma}	
\label{Lem2_Elasticity_Eq_VEM_Conver_Est}
Let  $\bb{u}\in\bb{V}$ and $\bb{u}_h\in\bb{V}_h$ be the solution of the problem \eqref{Elasticity_Eq_Var} and
\eqref{Elasticity_Eq_VEM}, respectively.
Then under the mesh assumption \textbf{C0}, for any $\bb{u}_{\mathcal{I}} \in \bb{V}_h$
and for any piecewise polynomial $\bb{u}_{\pi}\in (\mathbb{P}_k(\mathcal{T}_h^*))^2$, there holds
	\begin{equation}
	\label{Elasticity_VEM_Conver_Est}
	| \bb{u} - \bb{u}_h |_1
	\lesssim \big( | \bb{u} - \bb{u}_{\mathcal{I}} |_1
	+ | \bb{u} - \bb{u}_{\pi} |_{1,h}
	+ \lambda \| {\rm div}~\bb{u} - \Pi_0{\rm div}~\bb{u}_{\mathcal{I}} \|_0
	+ \| \bb{f} - \bb{f}_h \|_{\bb{V}_h^{'}}\big),
	\end{equation}
	where $\Pi_0{\rm div}~\bb{v} |_K = \Pi_0^K{\rm div}~\bb{v}$ for all $\bb{v}\in \bb{V}$,	and
	\begin{align}
	\| \bb{f} - \bb{f}_h \|_{\bb{V}_h^{'}}
	= \sup_{\bb{v}\in\bb{V}_h}
	\frac{(\bb{f},\,\bb{v}_h) - \langle\bb{f}_h,\,\bb{v}_h\rangle}{|\bb{v}_h|_1}.
	\label{Elasticity_VEM_rhs_Dual_Norm}
	\end{align}
\end{lemma}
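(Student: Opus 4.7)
The plan is to apply a standard Strang-type approach. Set $\bb{\delta}_h := \bb{u}_h - \bb{u}_{\mathcal{I}} \in \bb{V}_h$, so that the triangle inequality $|\bb{u}-\bb{u}_h|_1 \le |\bb{u}-\bb{u}_{\mathcal{I}}|_1 + |\bb{\delta}_h|_1$ reduces the task to controlling $|\bb{\delta}_h|_1$.

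The stability \eqref{Elasticity_Stability_Cond} of Theorem~\ref{Thm1_Elasticity_Norm_Stab}, together with Korn's inequality \eqref{First_Korn_Iq} and the nonnegativity of the $\lambda$-part of $a_h^{\widetilde{K}}$, yields the discrete coercivity $|\bb{\delta}_h|_1^2 \lesssim a_h(\bb{\delta}_h,\bb{\delta}_h)$. Subtracting \eqref{Elasticity_Eq_Var} from \eqref{Elasticity_Eq_VEM} tested against $\bb{\delta}_h$ gives the residual identity
\[
a_h(\bb{\delta}_h,\bb{\delta}_h) = \langle\bb{f}_h-\bb{f},\bb{\delta}_h\rangle + \big[a(\bb{u},\bb{\delta}_h) - a_h(\bb{u}_{\mathcal{I}},\bb{\delta}_h)\big].
\]
The first summand is controlled immediately by $\|\bb{f}-\bb{f}_h\|_{\bb{V}_h^{'}}|\bb{\delta}_h|_1$. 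The bracket I would split along $a = 2\mu a_\mu + \lambda({\rm div}\,\cdot,{\rm div}\,\cdot)$ and analyse the two pieces separately.

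For the $\mu$-piece, inserting $\bb{u}_\pi$ produces
\[
a_\mu(\bb{u},\bb{\delta}_h) - a_{\mu,h}(\bb{u}_{\mathcal{I}},\bb{\delta}_h) = a_\mu(\bb{u}-\bb{u}_\pi,\bb{\delta}_h) - a_{\mu,h}(\bb{u}_{\mathcal{I}}-\bb{u}_\pi,\bb{\delta}_h),
\]
where the crossing term vanishes because $\bb{u}_\pi$ is piecewise linear on $\mathcal{T}_h^*$ and hence invariant under each $\Pi_1^{\widetilde{K}}$, so the stabilization in $a_{\mu,h}^{\widetilde{K}}$ kills it and the polynomial consistency built into \eqref{Pi1E_Proj} applies. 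Cauchy--Schwarz together with \eqref{Elasticity_Stability_Cond} bounds the remainder by $(|\bb{u}-\bb{u}_\pi|_{1,h}+|\bb{u}-\bb{u}_{\mathcal{I}}|_1)|\bb{\delta}_h|_1$. For the $\lambda$-piece the crucial observation is that $\Pi_0^K{\rm div}\,\bb{u}_{\mathcal{I}}$ is constant on $K$, which gives the self-adjointness identity
\[
(\Pi_0^K{\rm div}\,\bb{u}_{\mathcal{I}},\Pi_0^K{\rm div}\,\bb{\delta}_h)_K = (\Pi_0^K{\rm div}\,\bb{u}_{\mathcal{I}},{\rm div}\,\bb{\delta}_h)_K,
\]
so that the $\lambda$-residual collapses to $\lambda({\rm div}\,\bb{u}-\Pi_0{\rm div}\,\bb{u}_{\mathcal{I}},{\rm div}\,\bb{\delta}_h)$ and is bounded by $\lambda\|{\rm div}\,\bb{u}-\Pi_0{\rm div}\,\bb{u}_{\mathcal{I}}\|_0|\bb{\delta}_h|_1$.

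Collecting the four bounds, cancelling one factor of $|\bb{\delta}_h|_1$, and inserting the result into the triangle inequality yields \eqref{Elasticity_VEM_Conver_Est}. The only delicate point, rather than a genuine obstacle, is safeguarding the $\lambda$-robustness: one has to exploit the self-adjointness of $\Pi_0^K$ so that the $\lambda$-factor ends up multiplying $\|{\rm div}\,\bb{u}-\Pi_0{\rm div}\,\bb{u}_{\mathcal{I}}\|_0$ instead of being absorbed into the coercivity side, and the $\mu$-consistency must be organised elementwise over the fine partition $\{\widetilde{K}\}$ rather than $\{K\}$, which is exactly why the hypothesis takes $\bb{u}_\pi$ piecewise polynomial on $\mathcal{T}_h^*$.
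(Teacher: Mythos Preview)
Your argument is correct and follows the same Strang-type route the paper takes: set $\bb{\delta}_h=\bb{u}_h-\bb{u}_{\mathcal{I}}$, obtain coercivity from Korn's inequality \eqref{First_Korn_Iq} together with the stability \eqref{Elasticity_Stability_Cond}, and then bound the consistency residual by splitting into the $\mu$- and $\lambda$-parts exactly as in the proof of Theorem~3.2 in \cite{Beirao-Brezzi-Marini-2013}, to which the paper explicitly defers. Your write-up in fact supplies the details that the paper omits, including the self-adjointness step for $\Pi_0^K$ and the elementwise polynomial consistency over $\mathcal{T}_h^*$.
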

\begin{proof}
By setting $\bb{\delta}_h = \bb{u}_h-\bb{u}_{\mathcal{I}}$ and using the triangle's inequality, it suffices to bound $|\bb{\delta}_h|_1$.
According to the equivalence \eqref{Elasticity_Stability_Cond} and Korn's inequality \eqref{First_Korn_Iq}, we have the coercivity
\[
	|\bb{\delta}_h|_1^2
	\lesssim  \|\bb{\varepsilon}(\bb{\delta}_h)\|_0^2
	=  a_{\mu}(\bb{\delta}_h,\,\bb{\delta}_h)
	\eqsim a_{\mu,h}(\bb{\delta}_h,\,\bb{\delta}_h).
\]
The remaining arguments are similar to the ones given in the proof of Theorem 3.2 in \cite{Beirao-Brezzi-Marini-2013}.
\end{proof}

We now prove the locking-free property of the proposed VEM.
To this end, we shall introduce a special interpolation operator as presented in the proof of the following lemma.
\begin{lemma}
\label{Lem3_Elasticity_Interp_Prop}
Let $\bb{u} \in \bb{H}_0^1(\Omega)\cap\bb{H}^2(\Omega)$ be the solution of problem \eqref{Elasticity_Eq_Var}.
Then there exists a VEM function $\bb{u}_{\mathcal{I}} \in \bb{V}_h$ such that
\begin{align}
\Pi_0^K{\rm div}~\bb{u}_{\mathcal{I}}  = \Pi_0^K{\rm div}~\bb{u}, \quad
\Pi_0^K{\rm rot}~\bb{u}_{\mathcal{I}}  = \Pi_0^K{\rm rot}~\bb{u},
\quad K\in\mathcal{T}_h,
\label{Elasticity_Interp_uI_Prop}
\end{align}
and
\begin{align}
| \bb{u} - \bb{u}_{\mathcal{I}} |_{1,K} \lesssim h |\bb{u}|_{2,K},
\quad K\in\mathcal{T}_h.
\label{Elasticity_Interp_uI_Est}
\end{align}
\end{lemma}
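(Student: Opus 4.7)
The plan is to specify $\bb{u}_{\mathcal{I}}$ by prescribing its DOFs at every vertex of the refined mesh $\mathcal{T}_h^{*}$ in such a way that the edge-integrals match those of $\bb{u}$ on every coarse edge, which will force \eqref{Elasticity_Interp_uI_Prop} by an integration by parts; the error bound \eqref{Elasticity_Interp_uI_Est} will then be obtained by a triangle-inequality comparison with the usual nodal VEM interpolant on $\mathcal{T}_h^{*}$.

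\emph{Construction and commuting property.} At every original vertex $z_i$ of $\mathcal{T}_h$ I would set $\bb{u}_{\mathcal{I}}(z_i)=\bb{u}(z_i)$, which is meaningful thanks to the embedding $\bb{H}^2(\Omega)\hookrightarrow \bb{C}^0(\overline{\Omega})$. At every added midpoint $m_i$ lying on a coarse edge $e_i=\overline{z_iz_{i+1}}$, choose $\bb{u}_{\mathcal{I}}(m_i)$ so that
\[
\int_{e_i}\bb{u}_{\mathcal{I}}\,\mathrm{d}s=\int_{e_i}\bb{u}\,\mathrm{d}s.
\]
Since $\bb{u}_{\mathcal{I}}|_{e_i}$ is piecewise linear with its single break at $m_i$, this equation has the explicit solution $\bb{u}_{\mathcal{I}}(m_i)=\frac{2}{|e_i|}\int_{e_i}\bb{u}\,\mathrm{d}s-\alpha\,\bb{u}(z_i)-(1-\alpha)\,\bb{u}(z_{i+1})$, which is symmetric under $z_i\leftrightarrow z_{i+1}$, $\alpha\leftrightarrow 1-\alpha$, so the value obtained from the two elements sharing $e_i$ agrees. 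The boundary condition $\bb{u}|_{\partial\Omega}=\bb{0}$ forces every boundary DOF of $\bb{u}_{\mathcal{I}}$ to vanish, hence $\bb{u}_{\mathcal{I}}\in\bb{V}_h$. The identities \eqref{Elasticity_Interp_uI_Prop} then follow immediately: for each $K\in\mathcal{T}_h$ integration by parts and the construction give
\[
\int_K\mathrm{div}\,\bb{u}_{\mathcal{I}}\,\mathrm{d}x=\sum_i\int_{e_i}\bb{u}_{\mathcal{I}}\cdot\bb{n}_K\,\mathrm{d}s=\sum_i\int_{e_i}\bb{u}\cdot\bb{n}_K\,\mathrm{d}s=\int_K\mathrm{div}\,\bb{u}\,\mathrm{d}x,
\]
and dividing by $|K|$ yields the div identity; the rot identity is identical with $\bb{n}_K$ replaced by $\bb{t}_K$.

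\emph{Error estimate.} Introduce the standard nodal VEM interpolant $\bb{u}_{\pi}\in\bb{V}_h$ on $\mathcal{T}_h^{*}$, which under assumption \textbf{C0} satisfies the classical bound $|\bb{u}-\bb{u}_{\pi}|_{1,\widetilde{K}}\lesssim h_K|\bb{u}|_{2,\widetilde{K}}$. The difference $\bb{w}_h:=\bb{u}_{\pi}-\bb{u}_{\mathcal{I}}\in\bb{V}_h$ has zero DOFs at every $z_i$, while at each added midpoint
\[
\bb{w}_h(m_i)=\bb{u}(m_i)+\alpha\,\bb{u}(z_i)+(1-\alpha)\,\bb{u}(z_{i+1})-\frac{2}{|e_i|}\int_{e_i}\bb{u}\,\mathrm{d}s.
\]
A direct substitution shows that the functional on the right annihilates every affine polynomial, so a Bramble--Hilbert argument on $K$ together with a trace inequality and the usual scaling yields $|\bb{w}_h(m_i)|\lesssim h_K|\bb{u}|_{2,K}$. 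Summing the squares over the $\mathcal{O}(1)$ edges of $\widetilde{K}$ and then invoking the norm equivalence and the inverse estimate of \cite{Chen-Huang-2018}, I obtain $|\bb{w}_h|_{1,\widetilde{K}}\lesssim\|\bb{\chi}(\bb{w}_h)\|_{l^2}\lesssim h_K|\bb{u}|_{2,K}$, and a triangle inequality concludes.

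\emph{Main obstacle.} The delicate step is the pointwise bound $|\bb{w}_h(m_i)|\lesssim h_K|\bb{u}|_{2,K}$: one must verify that the defining linear functional kills all affine polynomials, apply Bramble--Hilbert with the correct $h_K$-scaling using only $\bb{H}^2$-regularity (so no pointwise second-derivative information is available), and then push that pointwise information through the VEM inverse inequality and norm equivalence without losing a factor of $h_K^{-1}$, so as to recover the optimal $H^1$-rate claimed in \eqref{Elasticity_Interp_uI_Est}.
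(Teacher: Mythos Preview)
Your construction of $\bb{u}_{\mathcal{I}}$ and the derivation of \eqref{Elasticity_Interp_uI_Prop} coincide exactly with the paper's. For the error estimate \eqref{Elasticity_Interp_uI_Est}, however, you take a genuinely different route. The paper does not introduce the nodal VEM interpolant at all; instead it observes that the operator $\mathcal{I}_K$ preserves $(\mathbb{P}_1(K))^2$ exactly, picks a polynomial $\bb{u}_\pi\in(\mathbb{P}_1(K))^2$ from the Dupont--Scott theory, and then bounds $|\mathcal{I}_K(\bb{u}-\bb{u}_\pi)|_{1,K}$ via $\|\bb{u}-\bb{u}_\pi\|_{0,\infty,K}$ using a scaled Sobolev inequality on an auxiliary virtual triangulation of $K$. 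Your approach instead compares with the nodal VEM interpolant on $\mathcal{T}_h^{*}$ and controls the difference $\bb{w}_h$ by checking directly that the DOF functional at each $m_i$ annihilates affines and then applying Bramble--Hilbert pointwise. Both arguments are correct and rely on the same Chen--Huang norm equivalence and inverse inequality; the paper's version is a bit shorter because polynomial preservation of $\mathcal{I}_K$ makes the triangle-inequality split immediate, whereas your version avoids the virtual-triangulation Sobolev argument at the cost of the explicit functional computation you flag in your ``Main obstacle'' paragraph.
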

\begin{proof}
By the definition of $\Pi_0^K{\rm div}$ and by integration by parts, one has
\begin{align*}
\Pi_0^K{\rm div}~\bb{u} = \frac{1}{|K|} \int_K {\rm div}~\bb{u}\,\mathrm{d}x
= \frac{1}{|K|}\int_{\partial{K}}\bb{u}\cdot\bb{n}_K\,\mathrm{d}s
=\frac{1}{|K|} \sum_{e\subset\partial{K}}\int_{e}\bb{u}\cdot\bb{n}_K\,\mathrm{d}s.
\end{align*}
Then the first relation of \eqref{Elasticity_Interp_uI_Prop} is equal to
\begin{align*}
\sum_{e\subset\partial{K}}\int_{e}\bb{u}_{\mathcal{I}}\cdot\bb{n}_K\,\mathrm{d}s
=\sum_{e\subset\partial{K}}\int_{e}\bb{u}\cdot\bb{n}_K\,\mathrm{d}s.
\end{align*}
Similarly, the second relation of \eqref{Elasticity_Interp_uI_Prop} is equal to
\begin{align*}
\sum_{e\subset\partial{K}}\int_{e}\bb{u}_{\mathcal{I}}\cdot\bb{t}_K\,\mathrm{d}s
=\sum_{e\subset\partial{K}}\int_{e}\bb{u}\cdot\bb{t}_K\,\mathrm{d}s.
\end{align*}
Thus, it suffices to construct a VEM function $\bb{u}_{\mathcal{I}} \in \bb{V}_h$ such that
\[
\int_{e}\bb{u}_{\mathcal{I}}\,\mathrm{d}s
=\int_{e}\bb{u}\,\mathrm{d}s,
\quad e\subset\partial{K}.
\]

We construct the function $\bb{u}_{\mathcal{I}}$ by determining its DOF values.
For any edge $e_i$ of $K$ with two end points $z_i$ and $z_{i+1}$ with $i=1,2,\ldots,N_K$,
there exists one internal point $m_i$ such that the edge $e_i$ is divided into two subedges
$e_{i1}=\overline{z_im_i}$ and $e_{i2}=\overline{m_iz_{i+1}}$, and
$|e_{i1}|=\alpha|e_i|$. For the vertices of $K$, we define
\[
	\bb{u}_{\mathcal{I}}(z_i)=\bb{u}(z_i), \quad i=1,2,\ldots,N_K.
\]
Since  $\bb{u}_{\mathcal{I}}$ is piecewise linear along $\partial K$, we have
\begin{align*}
\int_{e_i}\bb{u}\,\mathrm{d}s
&= \int_{e_i}\bb{u}_{\mathcal{I}}\,\mathrm{d}s
	= \int_{e_{i1}}\bb{u}_{\mathcal{I}}\,\mathrm{d}s+\int_{e_{i2}}\bb{u}_{\mathcal{I}}\,\mathrm{d}s \\
&= \frac{|e_{i1}|}{2}\big(\bb{u}_{\mathcal{I}}(z_i)+\bb{u}_{\mathcal{I}}(m_i)\big) +
	\frac{|e_{i2}|}{2}\big(\bb{u}_{\mathcal{I}}(m_i)+\bb{u}_{\mathcal{I}}(z_{i+1})\big) \\
&= \frac{|e_i|}{2}\big(\bb{u}_{\mathcal{I}}(m_i)+\alpha\bb{u}_{\mathcal{I}}(z_i)+(1-\alpha)\bb{u}_{\mathcal{I}}(z_{i+1})\big) \\
&= \frac{|e_i|}{2}\big(\bb{u}_{\mathcal{I}}(m_i)+\alpha\bb{u}(z_i)+(1-\alpha)\bb{u}(z_{i+1})\big),
\end{align*}
which forces us to define
\[
	\bb{u}_{\mathcal{I}}(m_i)
	=\frac{2}{|e_i|}\int_{e_i}\bb{u}\,\mathrm{d}s - \alpha\bb{u}(z_i) -  (1-\alpha)\bb{u}(z_{i+1}),
	\quad i=1,2,\ldots,N_K.
\]

Denote $\bb{u}_{\mathcal{I}}|_K= \mathcal{I}_K\bb{u}$ for all $K\in\mathcal{T}_h$.
Since $\mathcal{I}_K\bb{u}\in \bb{V}_1(\widetilde{K})$,  we have by the inverse inequality and norm equivalence in \cite{Chen-Huang-2018} that
\begin{equation}
\label{Elasticity_Interp_uI_Est_aux1}
|\mathcal{I}_K\bb{u} |_{1,K}^2 \lesssim h_K^{-2} \| \mathcal{I}_K\bb{u} \|_{0,K}^2
\eqsim \sum_{p\in K}(\mathcal{I}_K\bb{u}(p))^2\lesssim \|u\|_{0,\infty,K}^2,
\end{equation}
where $p\in K$ means $p$ is a vertex of $\widetilde{K}$.

Under the mesh assumption {\bf C0}, by connecting the center of the disc used in the star-shaped condition with the vertices of $K$,
we can obtain a virtual triangulation $\mathcal{T}_K$ of $K$, which is quasi-uniform and shape-regular with the mesh size $h_K$.
Moreover, the number of the triangles in $\mathcal{T}_K$ is uniformly bounded with respect to $\mathcal{T}_h$.
For all $\tau\in \mathcal{T}_K$, it follows from the Sobolev inequality and the scaling argument that
\[
	\|\bb{u} \|_{0,\infty,\tau}^2\lesssim h_{\tau}^{-2}\| \bb{u} \|_{0,\tau}^2 + | \bb{u} |_{1,\tau}^2 + h_{\tau}^2 | \bb{u} |_{2,\tau}^2,
\]
where $h_{\tau}$ denotes the diameter of $\tau$, which is proportional to $h_K$. This immediately implies
\begin{equation}
\label{Elasticity_Interp_uI_Est_aux2}
\|\bb{u} \|_{0,\infty,K}^2\lesssim h_K^{-2}\| \bb{u} \|_{0,K}^2 + | \bb{u} |_{1,K}^2 + h_K^2 | \bb{u} |_{2,K}^2,\quad \bb{u}\in\bb{H}^2(K).
\end{equation}

On the other hand, by the Dupont-Scott theory (cf. \cite{BrennerScott2008}), for all $\bb{u}\in\bb{H}^2(K)$
there exists an $\bb{u}_{\pi}\in (\mathbb{P}_1(K))^2$ such that
\begin{align}
\| \bb{u}-\bb{u}_{\pi} \|_{0,K} + h_K | \bb{u}-\bb{u}_{\pi} |_{1,K}
\lesssim h_K^2 | \bb{u} |_{2,K}.
\label{Elasticity_Interp_uI_Est_aux3}
\end{align}

Observing that $ \mathcal{I}_K\bb{u}_{\pi}=\bb{u}_{\pi}$, we have by \eqref{Elasticity_Interp_uI_Est_aux1}--\eqref{Elasticity_Interp_uI_Est_aux3} that
\begin{align*}
| \bb{u}-\bb{u}_{\mathcal{I}} |_{1,K}
&= | \bb{u}- \mathcal{I}_K\bb{u} |_{1,K}
\leq | \bb{u}-\bb{u}_{\pi}|_{1,K} + | \mathcal{I}_K(\bb{u}-\bb{u}_{\pi}) |_{1,K} \\
& \lesssim h_K | \bb{u} |_{2,K} + h_K^{-1} \| \bb{u}-\bb{u}_{\pi} \|_{0,K} +  | \bb{u}-\bb{u}_{\pi}|_{1,K} + h_K | \bb{u}-\bb{u}_{\pi} |_{2,K} \\
& \lesssim h_K  | \bb{u} |_{2,K}.
\end{align*}
So the required estimate \eqref{Elasticity_Interp_uI_Est} holds.
\end{proof}

Thanks to Lemmas \ref{Lem2_Elasticity_Eq_VEM_Conver_Est}-\ref{Lem3_Elasticity_Interp_Prop},
the projection error estimates (cf. \cite{BrennerScott2008}) and
the estimate for \eqref{Elasticity_VEM_rhs_Dual_Norm} (cf. \cite{Beirao-Brezzi-Marini-2013}),
we are able to derive the following error estimate which is uniform with the incompressible parameter $\lambda$.
\begin{theorem}
	\label{Thm2_Elasticity_Eq_VEM_H1_Est}
	Let $\bb{u}$ and $\bb{u}_h$ be given as in Lemma \ref{Lem2_Elasticity_Eq_VEM_Conver_Est}.
	Under the mesh assumption {\bf C0} and the condition of regularity estimate \eqref{Elasticity_Eq_Reglar_Est},
	there holds
	\begin{align}
	| \bb{u} - \bb{u}_h |_1 \lesssim h \| \bb{f} \|_0,
	\label{Elasticity_Eq_VEM_H1_Err_Est}
	\end{align}
	with the hidden positive constant $C$ independent of $h$ and $\lambda$.
\end{theorem}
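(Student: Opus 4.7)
The plan is to apply the abstract estimate \eqref{Elasticity_VEM_Conver_Est} with the interpolant $\bb{u}_{\mathcal{I}}$ supplied by Lemma \ref{Lem3_Elasticity_Interp_Prop} and with $\bb{u}_{\pi}$ taken elementwise to be the Dupont-Scott polynomial of $\bb{u}$ in $(\mathbb{P}_1(\widetilde{K}))^2$, for which \eqref{Elasticity_Interp_uI_Est_aux3} provides the standard bounds. The regularity estimate \eqref{Elasticity_Eq_Reglar_Est} gives $\|\bb{u}\|_2 + \lambda\|\mathrm{div}~\bb{u}\|_1 \lesssim \|\bb{f}\|_0$, which is used to absorb the $H^2$ norm and, crucially, the $\lambda$-weighted piece.

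The first two terms on the right-hand side of \eqref{Elasticity_VEM_Conver_Est} are handled directly: summing the local interpolation bound \eqref{Elasticity_Interp_uI_Est} over $K\in\mathcal{T}_h$ yields $|\bb{u}-\bb{u}_{\mathcal{I}}|_1 \lesssim h|\bb{u}|_2$, and \eqref{Elasticity_Interp_uI_Est_aux3} summed elementwise gives $|\bb{u}-\bb{u}_{\pi}|_{1,h} \lesssim h|\bb{u}|_2$. Using $|\bb{u}|_2 \lesssim \|\bb{f}\|_0$, both contributions are $O(h\|\bb{f}\|_0)$.

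The key $\lambda$-dependent term is dispatched by the commutative property \eqref{Elasticity_Interp_uI_Prop}: since $\Pi_0^K \mathrm{div}~\bb{u}_{\mathcal{I}} = \Pi_0^K \mathrm{div}~\bb{u}$ on every $K\in\mathcal{T}_h$, a standard $L^2$-projection error estimate yields
\begin{align*}
\lambda\|\mathrm{div}~\bb{u} - \Pi_0\mathrm{div}~\bb{u}_{\mathcal{I}}\|_0
= \lambda\|\mathrm{div}~\bb{u} - \Pi_0\mathrm{div}~\bb{u}\|_0
\lesssim \lambda h |\mathrm{div}~\bb{u}|_1 \lesssim h\|\bb{f}\|_0,
\end{align*}
where the last inequality is precisely the $\lambda\|\mathrm{div}~\bb{u}\|_1 \lesssim \|\bb{f}\|_0$ half of \eqref{Elasticity_Eq_Reglar_Est}. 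This is the heart of the locking-free argument: without the commutative identity, only the $\|\bb{u}\|_2$ half of \eqref{Elasticity_Eq_Reglar_Est} could be used and the estimate would degrade like $\lambda h$.

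Finally, $\|\bb{f}-\bb{f}_h\|_{\bb{V}_h'}$ is bounded by the standard argument in \cite{Beirao-Brezzi-Marini-2013}: write the consistency gap $(\bb{f},\bb{v}_h)-\langle\bb{f}_h,\bb{v}_h\rangle$ elementwise, exploit that $\bb{f}_h|_{\widetilde{K}}=\Pi_0^{\widetilde{K}}\bb{f}$ and that $\hat{\bb{v}}_h$ is the boundary average of $\bb{v}_h$, and use Cauchy-Schwarz with a Poincar\'{e}-Friedrichs inequality on each $\widetilde{K}$ to get $\|\bb{f}-\bb{f}_h\|_{\bb{V}_h'}\lesssim h\|\bb{f}\|_0$. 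Combining the four bounds in \eqref{Elasticity_VEM_Conver_Est} yields \eqref{Elasticity_Eq_VEM_H1_Err_Est}. The main obstacle is conceptual rather than computational: everything turns on having engineered an interpolant that commutes with $\Pi_0^K\mathrm{div}$, which Lemma \ref{Lem3_Elasticity_Interp_Prop} achieves through the extra edge midpoints $m_i$ that furnish an additional degree of freedom per edge; the remaining estimates are routine.
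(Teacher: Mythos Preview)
Your proposal is correct and follows essentially the same route as the paper: the paper simply cites Lemmas \ref{Lem2_Elasticity_Eq_VEM_Conver_Est}--\ref{Lem3_Elasticity_Interp_Prop}, standard projection estimates, and the right-hand side bound from \cite{Beirao-Brezzi-Marini-2013}, and your write-up spells out exactly these ingredients, including the crucial use of the commutative identity \eqref{Elasticity_Interp_uI_Prop} together with the $\lambda\|\mathrm{div}~\bb{u}\|_1$ part of \eqref{Elasticity_Eq_Reglar_Est} to neutralize the $\lambda$-weighted term.
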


Using the duality argument and following the similar proof of Lemma 2.4 in \cite{Beirao-Brezzi-Marini-2013},
we can establish the error estimate in $L^2$ norm.
\begin{theorem}
	\label{Thm3_Elasticity_Eq_VEM_L2_Est}
	Assume that the domain $\Omega$ is convex. Under the same assumptions of Theorem \ref{Thm2_Elasticity_Eq_VEM_H1_Est},
	there holds
	\begin{align}
	\| \bb{u} - \bb{u}_h \|_0 \lesssim h^2 \| \bb{f} \|_0.
	\label{Elasticity_Eq_VEM_L2_Err_Est}
	\end{align}
\end{theorem}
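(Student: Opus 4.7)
The plan is to apply the Aubin--Nitsche duality argument, adapted to the present VEM setting in the spirit of the proof of Lemma 2.4 in \cite{Beirao-Brezzi-Marini-2013}. First I would introduce the dual problem: find $\bb{\phi}\in\bb{V}$ such that $a(\bb{v},\bb{\phi}) = (\bb{u}-\bb{u}_h,\bb{v})$ for all $\bb{v}\in\bb{V}$. Since $\Omega$ is convex, the regularity estimate analogous to \eqref{Elasticity_Eq_Reglar_Est} yields
\begin{equation*}
\|\bb{\phi}\|_2 + \lambda\|{\rm div}~\bb{\phi}\|_1 \lesssim \|\bb{u}-\bb{u}_h\|_0.
\end{equation*}
Testing with $\bb{v}=\bb{u}-\bb{u}_h$ gives $\|\bb{u}-\bb{u}_h\|_0^2 = a(\bb{u}-\bb{u}_h,\bb{\phi})$. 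Letting $\bb{\phi}_{\mathcal{I}}\in\bb{V}_h$ be the interpolant of $\bb{\phi}$ provided by Lemma \ref{Lem3_Elasticity_Interp_Prop} (which satisfies $\Pi_0^K{\rm div}~\bb{\phi}_{\mathcal{I}} = \Pi_0^K{\rm div}~\bb{\phi}$ and $|\bb{\phi}-\bb{\phi}_{\mathcal{I}}|_1\lesssim h|\bb{\phi}|_2$), and using the Galerkin orthogonalities $a(\bb{u},\bb{\phi}_{\mathcal{I}}) = (\bb{f},\bb{\phi}_{\mathcal{I}})$ and $a_h(\bb{u}_h,\bb{\phi}_{\mathcal{I}}) = \langle\bb{f}_h,\bb{\phi}_{\mathcal{I}}\rangle$, I would decompose
\begin{equation*}
\|\bb{u}-\bb{u}_h\|_0^2 = \underbrace{a(\bb{u}-\bb{u}_h,\bb{\phi}-\bb{\phi}_{\mathcal{I}})}_{R_1} + \underbrace{a_h(\bb{u}_h,\bb{\phi}_{\mathcal{I}}) - a(\bb{u}_h,\bb{\phi}_{\mathcal{I}})}_{R_2} + \underbrace{(\bb{f},\bb{\phi}_{\mathcal{I}}) - \langle\bb{f}_h,\bb{\phi}_{\mathcal{I}}\rangle}_{R_3},
\end{equation*}
and try to bound each $R_i$ by $h^2\|\bb{f}\|_0\|\bb{u}-\bb{u}_h\|_0$.

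For $R_1$, split the bilinear form into its $\mu$- and $\lambda$-parts. The $\mu$-part is immediate from Theorem \ref{Thm2_Elasticity_Eq_VEM_H1_Est} combined with \eqref{Elasticity_Interp_uI_Est}: $|\bb{u}-\bb{u}_h|_1\,|\bb{\phi}-\bb{\phi}_{\mathcal{I}}|_1 \lesssim h\|\bb{f}\|_0 \cdot h\|\bb{\phi}\|_2 \lesssim h^2\|\bb{f}\|_0\|\bb{u}-\bb{u}_h\|_0$. The $\lambda$-part is the delicate step: by the commuting property of Lemma \ref{Lem3_Elasticity_Interp_Prop}, ${\rm div}(\bb{\phi}-\bb{\phi}_{\mathcal{I}})$ has zero mean on each coarse element $K$, so ${\rm div}(\bb{u}-\bb{u}_h)$ may be shifted by any elementwise constant without changing the inner product. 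Choosing this constant as $\Pi_0^K{\rm div}(\bb{u}-\bb{u}_h)$ and exploiting the $\lambda$-uniform oscillation bound $\lambda\|{\rm div}~\bb{u} - \Pi_0{\rm div}~\bb{u}\|_0 \lesssim h\|\bb{f}\|_0$ coming from primal regularity, together with the dual estimate $|\bb{\phi}-\bb{\phi}_{\mathcal{I}}|_1\lesssim h\|\bb{u}-\bb{u}_h\|_0$, should yield the desired $O(h^2)$ bound.

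For $R_2$, I would use the polynomial consistency of the discrete bilinear form, $a_h^K(\bb{p},\bb{v}_h) = a^K(\bb{p},\bb{v}_h)$ for $\bb{p}\in(\mathbb{P}_1(K))^2$ (which follows separately for the $\mu$- and $\lambda$-pieces since ${\rm div}\,\bb{p}$ is constant so $\Pi_0{\rm div}~\bb{p} = {\rm div}~\bb{p}$), to insert piecewise linear projections $\bb{u}_\pi,\bb{\phi}_\pi$ and reduce $R_2$ to $\sum_K (a_h^K - a^K)(\bb{u}_h-\bb{u}_\pi,\bb{\phi}_{\mathcal{I}}-\bb{\phi}_\pi)$. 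The $\lambda$-part is then rewritten via the identity $(f,g)_K - (\Pi_0 f,\Pi_0 g)_K = (f-\Pi_0 f, g-\Pi_0 g)_K$, turning each factor into an oscillation quantity bounded using the commuting property, Bramble--Hilbert, and the regularity estimates. The $\mu$-part is handled by the stability of Theorem \ref{Thm1_Elasticity_Norm_Stab} combined with the $H^1$ error and interpolation. For $R_3$, the right-hand side consistency estimate from the proof of Theorem \ref{Thm2_Elasticity_Eq_VEM_H1_Est} is sharpened by exploiting the $H^2$ regularity of the dual solution $\bb{\phi}$, giving $|R_3| \lesssim h^2\|\bb{f}\|_0\|\bb{\phi}\|_2 \lesssim h^2\|\bb{f}\|_0\|\bb{u}-\bb{u}_h\|_0$. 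Combining the three bounds and dividing by $\|\bb{u}-\bb{u}_h\|_0$ yields \eqref{Elasticity_Eq_VEM_L2_Err_Est}.

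The main obstacle will be securing $\lambda$-uniform estimates for the $\lambda({\rm div}\cdot,{\rm div}\cdot)$ contributions appearing in $R_1$ and $R_2$. The commuting property of the interpolant from Lemma \ref{Lem3_Elasticity_Interp_Prop} is the essential tool: it renders ${\rm div}(\bb{\phi}-\bb{\phi}_{\mathcal{I}})$ $L^2$-orthogonal to constants on each coarse element, so that the factor $\lambda$ can be absorbed into oscillation terms controlled by the $\lambda$-uniform regularity bounds $\lambda\|{\rm div}~\bb{u}\|_1 \lesssim \|\bb{f}\|_0$ and $\lambda\|{\rm div}~\bb{\phi}\|_1 \lesssim \|\bb{u}-\bb{u}_h\|_0$. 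Without this property, a direct application of the Cauchy--Schwarz inequality to the $\lambda$-weighted terms would produce a spurious factor of $\lambda$ that would destroy the locking-free character of the estimate.
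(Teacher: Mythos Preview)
Your plan matches the paper's approach: the paper does not give a detailed proof of Theorem~\ref{Thm3_Elasticity_Eq_VEM_L2_Est} but simply invokes the duality argument of Lemma~2.4 in \cite{Beirao-Brezzi-Marini-2013}, which is exactly the Aubin--Nitsche scheme you set up, with Lemma~\ref{Lem3_Elasticity_Interp_Prop} supplying the commuting interpolant for the dual solution.

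There is, however, one point where your sketch would not close as written. In the $\lambda$-parts of $R_1$ and $R_2$ you propose to bound each term separately: in $R_1$ you shift ${\rm div}(\bb{u}-\bb{u}_h)$ by its piecewise mean and appeal to $\lambda\|{\rm div}\,\bb{u}-\Pi_0{\rm div}\,\bb{u}\|_0\lesssim h\|\bb{f}\|_0$, and in $R_2$ you write $(f,g)-(\Pi_0 f,\Pi_0 g)=(f-\Pi_0 f,g-\Pi_0 g)$ and estimate the two oscillation factors. The trouble is that both routes leave you with a factor $\lambda\|{\rm div}\,\bb{u}_h-\Pi_0{\rm div}\,\bb{u}_h\|_0$ (or its dual analogue $\lambda\|{\rm div}\,\bb{\phi}_{\mathcal I}-\Pi_0{\rm div}\,\bb{\phi}_{\mathcal I}\|_0$), and neither of these is controlled uniformly in $\lambda$: there is no regularity estimate for $\bb{u}_h$ or $\bb{\phi}_{\mathcal I}$ to absorb the $\lambda$. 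The remedy is not to estimate $R_1^\lambda$ and $R_2^\lambda$ separately but to add them first. Using $\Pi_0{\rm div}\,\bb{\phi}_{\mathcal I}=\Pi_0{\rm div}\,\bb{\phi}$ one obtains
\[
R_1^\lambda+R_2^\lambda
=\lambda\big({\rm div}\,\bb{u}-\Pi_0{\rm div}\,\bb{u},\,{\rm div}(\bb{\phi}-\bb{\phi}_{\mathcal I})\big)
-\lambda\big({\rm div}\,\bb{u}_h,\,{\rm div}\,\bb{\phi}-\Pi_0{\rm div}\,\bb{\phi}\big),
\]
and now every term can be handled: in the first summand put $\lambda$ on the $\bb{u}$-factor (primal regularity \eqref{Elasticity_Eq_Reglar_Est}); in the second, insert $\pm{\rm div}\,\bb{u}$ and put $\lambda$ on the $\bb{\phi}$-factor (dual regularity) for the piece involving $\bb{u}-\bb{u}_h$, and on the $\bb{u}$-factor for the remaining piece. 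With this regrouping your argument goes through exactly as intended.
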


\section{The VEM for the pure traction problem}
\label{Sec5}
In this section, we extend our approach to the pure traction problem described as follows:
\begin{equation}
\label{Elasticity_Eq_Traction}
\begin{cases}
- {\mathbf{div}}~\bb{\sigma}(\bb{u}) = \bb{f} & \text{in} ~~\Omega, \\
\bb{\sigma}(\bb{u}) \bb{n}  = \bb{0}  & \text{on} ~~\partial\Omega,
\end{cases}
\end{equation}
where $\bb{n}$ denotes the unit vector outward normal to $\partial\Omega$.
The problem \eqref{Elasticity_Eq_Traction} is uniquely solvable in $\widehat{\bb{V}}$ defined later
if  and only if $\bb{f}$ satisfies the following compatibility condition (cf. \cite{BrennerScott2008,Brenner-Sung-1992})
\begin{align}
\int_{\Omega} \bb{f}\cdot \bb{v}\,\mathrm{d}x = 0,
\quad \bb{v}\in{\bf{\rm RM}},
\nonumber
\end{align}
where
\[{\bf{\rm RM}} = \big\{\bb{v}~|~ \bb{v}=(a+bx_2,\,c-bx_1)^{\intercal}, ~a,b,c\in\mathbb{R}\big\}.\]

We define the admissible space as
\begin{align*}
\widehat{\bb{V}} = \Big\{
\bb{v} \in \bb{H}^1(\Omega) ~\big|~
\int_{\partial\Omega} \bb{v}\,\mathrm{d}s = \bb{0},
~ \int_{\Omega} {\rm rot} ~\bb{v} \,\mathrm{d}x=0
\Big\},
\end{align*}
which is slightly different from the space $\widehat{\boldsymbol{H}}^1(\Omega)$ defined by (cf. \cite{BrennerScott2008})
\[
	\widehat{\boldsymbol{H}}^1(\Omega) = \Big\{
	\bb{v} \in \bb{H}^1(\Omega) ~\big|~
	\int_{\Omega} \bb{v}\,\mathrm{d}x = \bb{0},
	~ \int_{\Omega} {\rm rot} ~\bb{v} \,\mathrm{d}x=0
	\Big\}.
\]
Here, the first constraint is replaced by an integrand on the boundary $\partial\Omega$ not the whole domain $\Omega$.
The choice here is considered for two reasons, one is to ensure the interpolation of the VEM function lies in the underlying VES,
and the other is to make the terms associated with Lagrange multipliers computable in the implementation.
One can refer to Subsection \ref{Sec7-1} for details.
The continuous variational problem \eqref{Elasticity_Eq_Traction} is the same as the one
given in \eqref{Elasticity_Eq_Var}, with  the function space replaced by $\widehat{\bb{V}}$.
The local VES on $\widetilde{K}$  is given by $\widehat{\boldsymbol{V}}_1(\widetilde{K})=(\widehat{V}_1(\widetilde{K}))^2$,
where $\widehat{V}_1(\widetilde{K})$ is the same as the one given in \eqref{Elasticity_Eq_Local_VES}, i.e.
\begin{equation*}
\widehat{V}_1(\widetilde{K}) =\big\{
v \in H^1(\widetilde{K}) ~|~ \Delta {v} = 0 ~~\mbox{in}~~\widetilde{K},
~ {v} |_e \in \mathbb{P}_1(e), \quad e \subset \partial{\widetilde{K}}\big\},
\end{equation*}
and the global VES given by
\begin{align*}
\widehat{\bb{V}}_h = \Big\{
\bb{v}_h\in\bb{H}^1(\Omega) ~|~ \bb{v}_h|_{\widetilde{K}} \in \widehat{\bb{V}}_1(\widetilde{K}),
\quad \widetilde{K}\in\mathcal{T}_h, ~
\int\nolimits_{\partial\Omega} \bb{v}_h\,\mathrm{d}s=\bb{0}, ~ \int_{\Omega} {\rm rot} ~\bb{v}_h \,\mathrm{d}x=0
\Big\}.
\end{align*}

The locking-free conforming VEM of problem \eqref{Elasticity_Eq_Traction} is
to find $\bb{u}_h \in \widehat{\bb{V}}_h$ such that
\begin{equation}
\label{Elasticity_Eq_Traction_VEM}
a_h(\bb{u}_h,\,\bb{v}_h) = \langle \bb{f}_h,\,\bb{v}_h \rangle, \quad  \bb{v}_h \in \widehat{\bb{V}}_h.
\end{equation}
Here, the construction of the computable bilinear form and the approximation of the right hand side
are the same with that in \eqref{Elasticity_Eq_VEM}.

We now establish a same result as in Lemma \ref{Lem3_Elasticity_Interp_Prop}.
\begin{lemma}
	\label{Lem4_Elasticity_Traction_Interp_Prop}
	Let $\bb{u} \in \widehat{\bb{V}}\cap\bb{H}^2(\Omega)$ be the weak solution of
	problem \eqref{Elasticity_Eq_Traction}.
	Then there exists a VEM function $\bb{u}_{\mathcal{J}} \in \widehat{\bb{V}}_h$ such that
	\begin{align*}
	\Pi_0^K{\rm div}~\bb{u}_{\mathcal{J}}  = \Pi_0^K{\rm div}~\bb{u},
	\quad K\in\mathcal{T}_h,
	\end{align*}
	and
	\begin{align*}
	|\bb{u} - \bb{u}_{\mathcal{J}}|_1 \lesssim h |\bb{u}|_2.
	\end{align*}
\end{lemma}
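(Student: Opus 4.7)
The plan is to reuse verbatim the local DOF prescription of Lemma~\ref{Lem3_Elasticity_Interp_Prop}, and then to observe that the two global constraints defining $\widehat{\bb{V}}_h$ are automatically inherited by the resulting interpolant. Concretely, for each $K\in\mathcal{T}_h$, on every edge $e_i$ with endpoints $z_i,z_{i+1}$ and interior node $m_i$, I would set
\[
\bb{u}_{\mathcal{J}}(z_i)=\bb{u}(z_i),\qquad
\bb{u}_{\mathcal{J}}(m_i)=\frac{2}{|e_i|}\int_{e_i}\bb{u}\,\mathrm{d}s-\alpha\bb{u}(z_i)-(1-\alpha)\bb{u}(z_{i+1}),
\]
with no modification on the boundary vertices (since no homogeneous Dirichlet condition is now imposed). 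The same algebra as in the earlier proof then yields the key edge-integral identity $\int_{e}\bb{u}_{\mathcal{J}}\,\mathrm{d}s=\int_{e}\bb{u}\,\mathrm{d}s$ for every edge of $\mathcal{T}_h^{\ast}$, so $\bb{u}_{\mathcal{J}}$ is well defined as an element of $\widehat{\bb{V}}_1(\widetilde{K})$ on each $\widetilde{K}$ and is continuous across interelement edges, hence lies in $\bb{H}^1(\Omega)$.

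Next I would verify the membership $\bb{u}_{\mathcal{J}}\in\widehat{\bb{V}}_h$. Summing the edge-integral identity over the boundary edges gives
\[
\int_{\partial\Omega}\bb{u}_{\mathcal{J}}\,\mathrm{d}s
=\int_{\partial\Omega}\bb{u}\,\mathrm{d}s=\bb{0},
\]
since $\bb{u}\in\widehat{\bb{V}}$. Likewise, integration by parts on $\Omega$ combined with the tangential edge-integral identity yields
\[
\int_{\Omega}{\rm rot}~\bb{u}_{\mathcal{J}}\,\mathrm{d}x
=\int_{\partial\Omega}\bb{u}_{\mathcal{J}}\cdot\bb{t}\,\mathrm{d}s
=\int_{\partial\Omega}\bb{u}\cdot\bb{t}\,\mathrm{d}s
=\int_{\Omega}{\rm rot}~\bb{u}\,\mathrm{d}x=0.
\]

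The divergence projection identity and the $H^1$ error estimate now follow exactly as in Lemma~\ref{Lem3_Elasticity_Interp_Prop}: the former by applying the divergence theorem on $K$ and invoking the edge-integral identity for the normal components, and the latter by the same local chain of inequalities (the inverse inequality and DOF norm equivalence on $\bb{V}_1(\widetilde{K})$ from \cite{Chen-Huang-2018}, the Sobolev inequality on the virtual triangulation $\mathcal{T}_K$ of $K$, and Dupont-Scott approximation by a linear polynomial), since that chain is purely local and never used the Dirichlet condition on $\bb{u}$.

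The only delicate point, and what I would flag as the main obstacle, is ensuring that the nonstandard side conditions of $\widehat{\bb{V}}_h$ are respected by an elementwise-defined interpolant. This is precisely why $\widehat{\bb{V}}$ is prescribed through the boundary integral $\int_{\partial\Omega}\bb{v}\,\mathrm{d}s=\bb{0}$ rather than through $\int_{\Omega}\bb{v}\,\mathrm{d}x=\bb{0}$: the boundary form descends edge-by-edge from the identity $\int_{e}\bb{u}_{\mathcal{J}}\,\mathrm{d}s=\int_{e}\bb{u}\,\mathrm{d}s$, whereas the domain form would force one to add a rigid-motion correction $\bb{u}_{\mathcal{J}}\mapsto\bb{u}_{\mathcal{J}}+\bb{r}$ with $\bb{r}\in{\rm RM}$ and to separately bound $|\bb{r}|_1$ to keep the $O(h)$ estimate intact.
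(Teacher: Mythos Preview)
Your argument is correct and in fact slightly tighter than the paper's. Both proofs use the same local interpolant $\bb{u}_{\mathcal{I}}$ from Lemma~\ref{Lem3_Elasticity_Interp_Prop}; the paper then sets $\bb{u}_{\mathcal{J}}=\bb{u}_{\mathcal{I}}-\frac{1}{|\partial\Omega|}\int_{\partial\Omega}\bb{u}_{\mathcal{I}}\,\mathrm{d}s$ and checks the two constraints, whereas you observe that the edge-integral identity $\int_{e}\bb{u}_{\mathcal{I}}\,\mathrm{d}s=\int_{e}\bb{u}\,\mathrm{d}s$ summed over boundary edges already forces $\int_{\partial\Omega}\bb{u}_{\mathcal{I}}\,\mathrm{d}s=\int_{\partial\Omega}\bb{u}\,\mathrm{d}s=\bb{0}$, so the paper's correction constant is zero and no modification is needed. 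Your closing remark about why $\widehat{\bb{V}}$ is defined via $\int_{\partial\Omega}\bb{v}\,\mathrm{d}s=\bb{0}$ rather than $\int_{\Omega}\bb{v}\,\mathrm{d}x=\bb{0}$ is exactly the point.

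One small imprecision: the edge-integral identity is established for every edge $e$ of the \emph{coarse} mesh $\mathcal{T}_h$, not for the sub-edges of $\mathcal{T}_h^{*}$ individually. This does not affect your argument, since summing over the coarse boundary edges already covers $\partial\Omega$, but the phrasing ``for every edge of $\mathcal{T}_h^{*}$'' should be adjusted.
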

\begin{proof}
Let $\bb{u}_{\mathcal{I}}\in \bb{V}_h$ be the function given in Lemma \ref{Lem3_Elasticity_Interp_Prop}.
Then it immediately gives
\[
	\Pi_0^K{\rm div}~\bb{u}_{\mathcal{I}}  = \Pi_0^K{\rm div}~\bb{u}, \quad
	\Pi_0^K{\rm rot}~\bb{u}_{\mathcal{I}}  = \Pi_0^K{\rm rot}~\bb{u},
	\quad \, K\in\mathcal{T}_h.
\]
Define
\[
	\bb{u}_{\mathcal{J}} = \bb{u}_{\mathcal{I}} - \frac{1}{|\partial\Omega|}\int_{\partial\Omega} \bb{u}_{\mathcal{I}}\, \mathrm{d}s.
\]
We then have
\[
	\int_{\partial\Omega} \bb{u}_{\mathcal{J}} {\rm d}s = \bb{0},
	\quad
	\int_\Omega {\rm rot}~\bb{u}_{\mathcal{J}} \, \mathrm{d}x
	= \int_\Omega {\rm rot}~\bb{u}_{\mathcal{I}} \, \mathrm{d}x
	=\int_\Omega {\rm rot}~\bb{u}  \, \mathrm{d}x = 0 ,
\]
which indicates $\bb{u}_{\mathcal{J}} \in \widehat{\bb{V}}_h$. The estimates obviously follows from \eqref{Elasticity_Interp_uI_Est}.
\end{proof}

With the help of the above lemma and following the same argument in Section \ref{Sec4},
we are able to derive the parameter-free error estimates described as follows, whose proof is omitted for simplicity.

\begin{theorem}
	\label{Thm4_Elasticity_Eq_Traction_VEM_Est}
	Let $\bb{u}$ be the weak solution of problem \eqref{Elasticity_Eq_Traction} and
	$\bb{u}_h$ be the discrete solution of problem \eqref{Elasticity_Eq_Traction_VEM}, respectively.
	Then there holds
	\begin{align*}
	| \bb{u} - \bb{u}_h |_1 \lesssim h  \| \bb{f} \|_0.
	\end{align*}
	Moreover, if $\Omega$ is convex, we have
	\begin{align*}
	\| \bb{u} - \bb{u}_h \|_0 \lesssim h^2 \| \bb{f} \|_0.
	\end{align*}
	Here, the hidden positive constant $C$ is independent of $h$ and $\lambda$.
\end{theorem}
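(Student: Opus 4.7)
The plan is to essentially repeat the arguments of Section \ref{Sec4} with the displacement boundary condition replaced by the constraints defining $\widehat{\bb{V}}$ and $\widehat{\bb{V}}_h$. First I would establish an abstract C\'ea-type error estimate analogous to Lemma \ref{Lem2_Elasticity_Eq_VEM_Conver_Est} on the constrained spaces. This requires a Korn-type coercivity on $\widehat{\bb{V}}$: since $\int_\Omega {\rm rot}~\bb{v}\,\mathrm{d}x = 0$ for $\bb{v}\in\widehat{\bb{V}}$, applying Lemma \ref{Lem1_Rot_Korn_Iq} to $\Omega$ itself (using that a convex polygon is star-shaped with respect to some interior disc) gives $|\bb{v}|_{1,\Omega} \lesssim \|\bb{\varepsilon}(\bb{v})\|_{0,\Omega}$; the boundary-average constraint $\int_{\partial\Omega}\bb{v}\,\mathrm{d}s=\bb{0}$ then upgrades this to a full $\bb{H}^1$ coercivity via the Poincar\'e--Friedrichs inequality. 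The corresponding discrete coercivity on $\widehat{\bb{V}}_h$ follows element-by-element from the stability identity \eqref{Elasticity_Stability_Cond} of Theorem \ref{Thm1_Elasticity_Norm_Stab} together with this global Korn inequality.

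With the abstract bound in hand, I would insert the interpolant $\bb{u}_{\mathcal{J}}$ from Lemma \ref{Lem4_Elasticity_Traction_Interp_Prop} together with a standard piecewise linear approximation $\bb{u}_{\pi}$. The crucial ingredient is the commuting relation $\Pi_0^K{\rm div}~\bb{u}_{\mathcal{J}} = \Pi_0^K{\rm div}~\bb{u}$, which reduces the potentially $\lambda$-dependent contribution to $\lambda \|{\rm div}~\bb{u} - \Pi_0{\rm div}~\bb{u}\|_0 \lesssim \lambda h |{\rm div}~\bb{u}|_1$; the pure-traction analogue of the regularity estimate \eqref{Elasticity_Eq_Reglar_Est}, namely $\|\bb{u}\|_2+\lambda\|{\rm div}~\bb{u}\|_1 \lesssim \|\bb{f}\|_0$ on $\widehat{\bb{V}}$ for convex polygonal $\Omega$, then bounds this term by $\mathcal{O}(h\|\bb{f}\|_0)$ uniformly in $\lambda$. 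The remaining terms $|\bb{u}-\bb{u}_{\mathcal{J}}|_1$, $|\bb{u}-\bb{u}_{\pi}|_{1,h}$, and $\|\bb{f}-\bb{f}_h\|_{\widehat{\bb{V}}_h'}$ are controlled by Lemma \ref{Lem4_Elasticity_Traction_Interp_Prop}, the Dupont--Scott theory, and the quadrature arguments already invoked in the proof of Theorem \ref{Thm2_Elasticity_Eq_VEM_H1_Est}, respectively.

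For the $L^2$ estimate I would invoke the Aubin--Nitsche duality argument: introduce the auxiliary problem $a(\bb{\varphi},\bb{v}) = (\bb{u}-\bb{u}_h,\bb{v})$ for all $\bb{v}\in\widehat{\bb{V}}$, which under the convexity of $\Omega$ admits the regularity $\|\bb{\varphi}\|_2 + \lambda \|{\rm div}~\bb{\varphi}\|_1 \lesssim \|\bb{u}-\bb{u}_h\|_0$, and then follow the lines of the proof of Lemma 2.4 in \cite{Beirao-Brezzi-Marini-2013} verbatim, using the polynomial consistency of $a_h$ on piecewise linears together with a commuting interpolant $\bb{\varphi}_{\mathcal{J}}$ of $\bb{\varphi}$. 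The main obstacle I foresee is not technical but bookkeeping: one must verify carefully that the constraints $\int_{\partial\Omega}\bb{v}_h\,\mathrm{d}s=\bb{0}$ and $\int_\Omega {\rm rot}~\bb{v}_h\,\mathrm{d}x=0$ defining $\widehat{\bb{V}}_h$ are preserved under the relevant interpolants, so that orthogonality against the rigid motion kernel $\bf{\rm RM}$ (and hence the vanishing of the Lagrange-multiplier residuals) transfers without introducing $\lambda$-dependent error terms. Lemma \ref{Lem4_Elasticity_Traction_Interp_Prop} already handles this on the primal side by subtracting the boundary average, and the same adjustment applies to the dual interpolant, after which the duality computation mirrors the displacement case and yields the stated $\mathcal{O}(h^2\|\bb{f}\|_0)$ bound.
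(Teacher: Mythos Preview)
Your proposal is correct and follows essentially the same approach as the paper, which in fact omits the proof entirely and simply states that it proceeds ``following the same argument in Section \ref{Sec4}'' with Lemma \ref{Lem4_Elasticity_Traction_Interp_Prop} in place of Lemma \ref{Lem3_Elasticity_Interp_Prop}. Your outline supplies precisely those details---the Korn coercivity on $\widehat{\bb V}$ via Lemma \ref{Lem1_Rot_Korn_Iq}, the commuting interpolant $\bb u_{\mathcal J}$, the traction regularity analogue of \eqref{Elasticity_Eq_Reglar_Est}, and the duality argument---so there is nothing to correct.
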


\section{Numerical Experiments}
\label{Sec7}
In this section, we present some numerical tests for the locking-free conforming VEMs \eqref{Elasticity_Eq_VEM}
and \eqref{Elasticity_Eq_Traction_VEM} to confirm the theoretical results. All the tests are implemented in Matlab R2016a.

Let $\bb{u}$ be the exact solution of \eqref{Elasticity_Eq} (resp. \eqref{Elasticity_Eq_Traction}) and
$\bb{u}_h$ the discrete solution of the proposed VEM \eqref{Elasticity_Eq_VEM} (resp. \eqref{Elasticity_Eq_Traction_VEM}).
For the ease of programming, we always choose $\alpha=1/2$ in our numerical method.
That means, the midpoint of each edge of the polygonal element $K$ is taken as an additional vertex of $\widetilde{K}$.
Since the VEM solution $\bb{u}_h$ is not explicitly known inside the polygonal elements,
as in \cite{Beirao-Lovadina-Russo-2017}, we will evaluate the computable errors
by comparing the exact solution $\bb{u}$ with the elliptic projection $\Pi_1^{\widetilde{K}}{\bb{u}_h}$  of $\bb{u}_h$.
In this way, the discrete $L^2$ error {\rm ErrL2} and $H^1$ error {\rm ErrH1} are respectively quantified by
\begin{align*}
{\rm ErrL2} = \| \bb{u}  - \Pi_1 \bb{u}_h \|_{0}
=\Big( \sum\limits_{\widetilde{K}\in\mathcal{T}_h^{*}} \| \bb{u}  - \Pi_1 ^{\widetilde{K}} \bb{u}_h \|_{0,\widetilde{K}}^2 \Big)^{1/2}
\quad \text{and} \quad
{\rm ErrH1} = | \bb{u}  - \Pi_1 \bb{u}_h |_{1}
=\Big( \sum\limits_{\widetilde{K}\in\mathcal{T}_h^{*}} | \bb{u}  - \Pi_1^{\widetilde{K}} \bb{u}_h |_{1,\widetilde{K}}^2 \Big)^{1/2}.
\end{align*}

\subsection{Implementation of the proposed VEM}
\label{Sec7-1}
In this subsection, to help the reader use the numerical method,
we present the computer implementation of the $L^2$ projection $\Pi_0^K{\rm div}$.
One can refer to \cite{Dassi-Vacca-2020} for the numerical realization of other standard terms.
Let $\phi^{\intercal} = (\phi_1, \cdots ,\phi_N)$ be the basis functions of $V_1(\widetilde{K})$ and denote
\[
	\bb{\phi}^\intercal = ( {\overline \phi_1}, \cdots ,{\overline \phi_N},{\underline \phi_1}, \cdots ,{\underline \phi_N} ),
	\quad N = 2N_K
\]
to be the basis functions of the vector space $\bb{V}_1(\widetilde{K})$, where
\[
	{\overline \phi_i} =
	\begin{pmatrix}
	\phi_i \\
	0
	\end{pmatrix},
	\quad ~~
	{\underline \phi_i} =
	\begin{pmatrix}
	0 \\
	\phi_i
	\end{pmatrix},
	\quad ~i = 1, 2, \cdots ,{N}.
\]
Then the definition of  $L^2$ projection $\Pi_0^K{\rm div}$ can be written in vector form as
\[
	\int_K \Pi_0^K{\rm div}~{\bb \phi^\intercal} \,\mathrm{d}x = \int_K {\rm div}~{\bb \phi^\intercal}\, \mathrm{d}x \qquad
	\mbox{or} \qquad \bb{H}_0 \bb{\Pi}_{0*} = \bb{C}_0,
\]
where
\[
	\Pi_0^K{\rm div}~{\bb \phi^\intercal} = \bb{\Pi}_{0*}, \quad
	\bb{H}_0 = |K|, \quad  \bb{C}_0 = \int_K {\rm div}~{\bb \phi^\intercal} \, \mathrm{d}x.
\]
By integration by parts,
\[
	\bb{C}_0 = \int_K {\rm div}~{\bb \phi^{\intercal}} \,\mathrm{d}x
	= \int_{\partial {K}} \bb{\phi}^{\intercal}  \cdot \bb{n}_K \, \mathrm{d}s
	= \int_{\partial {K}} [\phi^{\intercal}   \cdot {n_1},\phi^{\intercal} \cdot {n_2}] \,\mathrm{d}s.
\]
Owing to the Kronecker property $\chi_i(\phi_j) = \delta_{ij}$, the two integrals of the right-hand side can be easily computed
by using the trapezoidal rule and the associated local stiffness matrix
\[
	\bb{B}_K = \int_K \Pi_0^K{\rm div}~\bb{\phi} \Pi_0^K{\rm div}~ \bb{\phi}^{\intercal} \, \mathrm{d}x
	= \bb{\Pi}_{0*}^\intercal \bb{H}_0 \bb{\Pi}_{0*}
\]
will be elementwise assembled.

On the other hand, for the pure traction problem, the discrete admissible function must satisfy the constraints
\[
	\int_{\partial\Omega} \bb{u}_h \, \mathrm{d}s = \bb{0} \quad \mbox{and} \quad
	\int_\Omega {\rm rot}~\bb{u}_h \, \mathrm{d}x = 0.
\]
To impose these constraints, we introduce Lagrange multipliers $\beta_i \in \mathbb{R}$ and consider the augmented variational formulation:
Find $(\boldsymbol{u}_h,\beta_1, \beta_2,\beta_3)\in \overline{\bb{V}}_h \times \mathbb{R}\times \mathbb{R}\times \mathbb{R}$ such that
\begin{equation}
\label{Elasticity_augVEM}
\begin{cases}
	a_h(\bb{u}_h,\bb{v}_h) + \beta_1 \int_{\partial\Omega} \bb{u}_{h,1} \, \mathrm{d}s
		+ \beta_2\int_{\partial\Omega} \bb{u}_{h,2} \, \mathrm{d}s
		+ \beta_3 \int_\Omega {\rm rot}~\bb{u}_h \, \mathrm{d}x
	& = \langle \bb{f}_h, \, \bb{v}_h \rangle, \qquad \bb{v}_h\in \overline{\bb{V}}_h, \\
	\mu_1\int_{\partial\Omega} \bb{v}_{h,1} \, \mathrm{d}s & = 0, \qquad \mu_1 \in \mathbb{R},\\
	\mu_2\int_{\partial\Omega} \bb{v}_{h,2} \, \mathrm{d}s & = 0, \qquad \mu_2 \in \mathbb{R},\\
	\mu_3\int_\Omega {\rm rot}~\bb{v}_h  \, \mathrm{d}x & = 0, \qquad \mu_3 \in \mathbb{R},\\
\end{cases}
\end{equation}
where $\bb{u}_h = (\bb{u}_{h,1}, \bb{u}_{h,2})^{\intercal}$, $\bb{v}_h = (\bb{v}_{h,1}, \bb{v}_{h,2})^{\intercal}$
and
\begin{align*}
\overline{\bb{V}}_h = \Big\{
\bb{v}_h\in\bb{H}^1(\Omega) ~|~ \bb{v}_h|_{\widetilde{K}} \in \widehat{\bb{V}}_1(\widetilde{K}),
\quad \widetilde{K}\in\mathcal{T}_h
\Big\}.
\end{align*}
Let $\bb{\varphi}_i$, $i=1,\cdots, N$ be the nodal basis function of $\overline{\bb{V}}_h$,
where $N$ is the dimension of $\overline{\bb{V}}_h$.
Then we can write
\[
	\bb{u}_h = \sum\limits_{i=1}^N \chi_i(\bb{u}_h)\bb{\varphi}_i
	=: \bb{\varphi}^{\intercal} \bb{\chi}(\bb{u}_h).
\]
Plug the above expression into \eqref{Elasticity_augVEM}, and take $\bb{v}_h = \bb{\varphi}_j$ for $j=1,\cdots,N$.
We have
\[
	\begin{cases}
		\sum\limits_{i=1}^Na_h(\bb{\varphi}_i,\bb{\varphi}_j) \, \chi_i
		 + \beta_1\int_{\partial\Omega} \bb{\varphi}_{i,1} \, \mathrm{d}s
		 + \beta_2\int_{\partial\Omega} \bb{\varphi}_{i,2} \, \mathrm{d}s
		 + \beta_3 \int_\Omega {\rm rot}~\bb{\varphi}_i \, \mathrm{d}x & = 0, \quad j=1,2,\cdots,N, \\
		\sum\limits_{i=1}^N \int_{\partial\Omega} \bb{\varphi}_{i,1} \, \mathrm{d}s \,\chi_i & = 0, \\
		\sum\limits_{i=1}^N \int_{\partial\Omega} \bb{\varphi}_{i,2}  \, \mathrm{d}s \, \chi_i & = 0, \\
		\sum\limits_{i=1}^N \int_\Omega {\rm rot}~\bb{\varphi}_i \, \mathrm{d}x \, \chi_i & = 0 .
	\end{cases}
\]
Let
\[
	\bb{A} = \Big(a_h(\bb{\varphi}_j,\bb{\varphi}_i)\Big)_{N\times N}, \quad
	\bb{d}_1 = \Big(\int_{\partial\Omega} \bb{\varphi}_{i,1} \, \mathrm{d}s \Big)_{N\times 1}, \quad
	\bb{d}_2 = \Big(\int_{\partial\Omega} \bb{\varphi}_{i,2} \, \mathrm{d}s \Big)_{N\times 1}, \quad
	\bb{d}_3 = \Big(\int_\Omega {\rm rot}~\bb{\varphi}_i \,\mathrm{d}x \Big)_{N\times 1}
\]
and
\[
	\bb{f} = \Big(  \langle \bb{f}_h, \bb{\varphi}_i \rangle \Big)_{N\times 1}.
\]
The linear system can be written in matrix form:
\[
	\begin{bmatrix}
	\bb{A}              & \bb{d}_1 & \bb{d}_2  & \bb{d}_3 \\
	\bb{d}_1^{\intercal}  &     0    &           & \\
	\bb{d}_2^{\intercal}  &          &     0     & \\
	\bb{d}_3^{\intercal}  &          &           & 0\\
	\end{bmatrix}
	\begin{pmatrix}
	\bb{\chi} \\
	\beta_1 \\
	\beta_2 \\
	\beta_3
	\end{pmatrix}
	=\begin{pmatrix}
	\bb{f} \\
	0 \\
	0 \\
	0
	\end{pmatrix}.
\]

\subsection{The pure displacement problem}
\label{Sec7-2}

{\bf Test 1:} We consider  the linear elasticity problem \eqref{Elasticity_Eq} on $\Omega=(0,1)^2$ with homogeneous Dirichlet boundary conditions.
The load term $\bb{f}$ is chosen such that the exact solution of \eqref{Elasticity_Eq} is
\begin{align*}
\bb{u}(x_1,\,x_2) =
\begin{pmatrix}
(-1+\cos{(2\pi{x_1})})\sin{(2\pi{x_2})} \\
(1-\cos{(2\pi{x_2})})\sin{(2\pi{x_1})}
\end{pmatrix}
+\frac{1}{\mu+\lambda}\sin{(\pi{x_1})}\sin{({\pi{x_2}})}
\begin{pmatrix}
1 \\
1
\end{pmatrix}.
\end{align*}

\begin{figure}[H]
	\centering
	\subfigure[Unstructured triangles]
	{\includegraphics[scale=0.26]{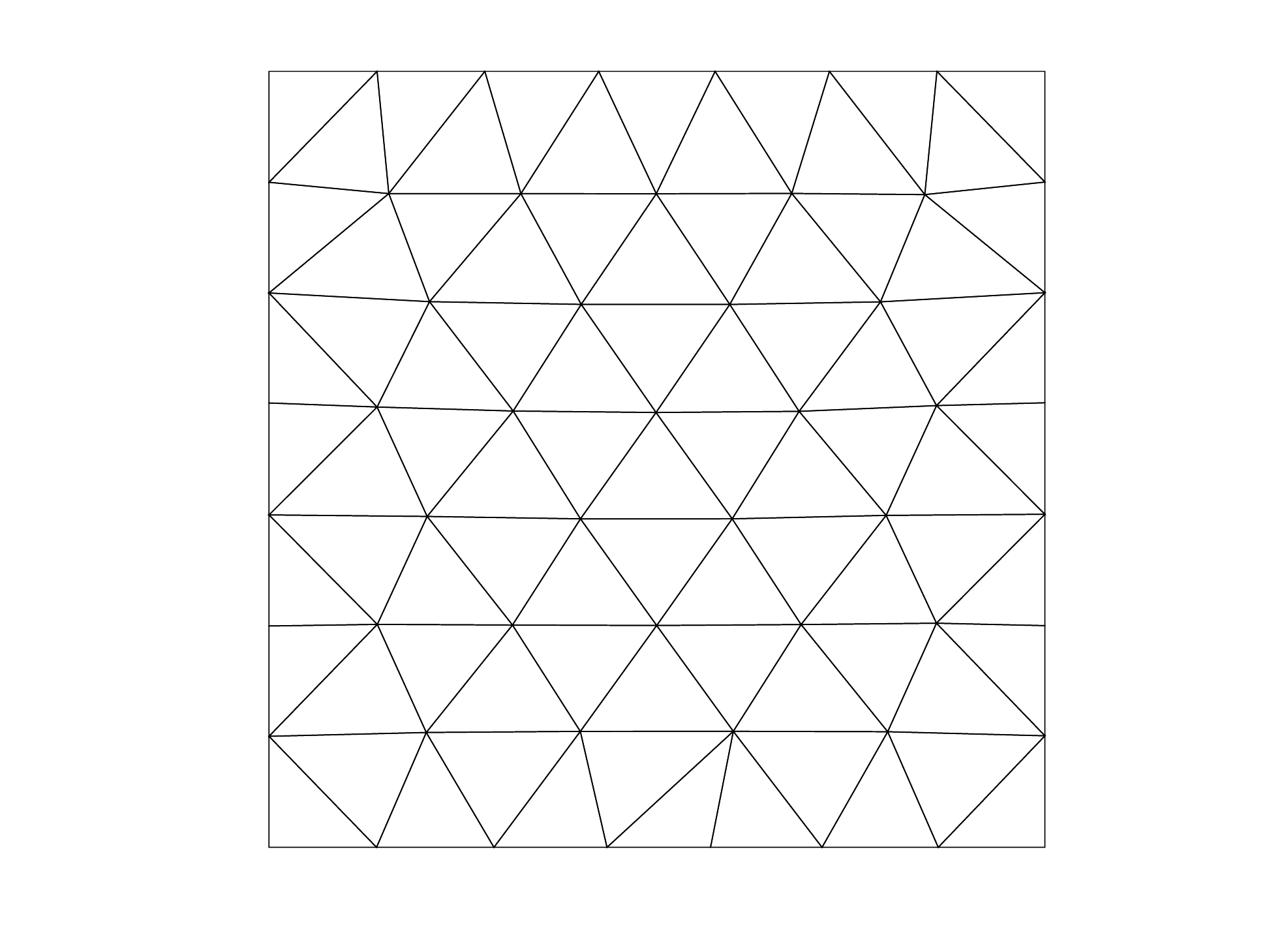}}
	\subfigure[Distorted squares]
	{\includegraphics[scale=0.26]{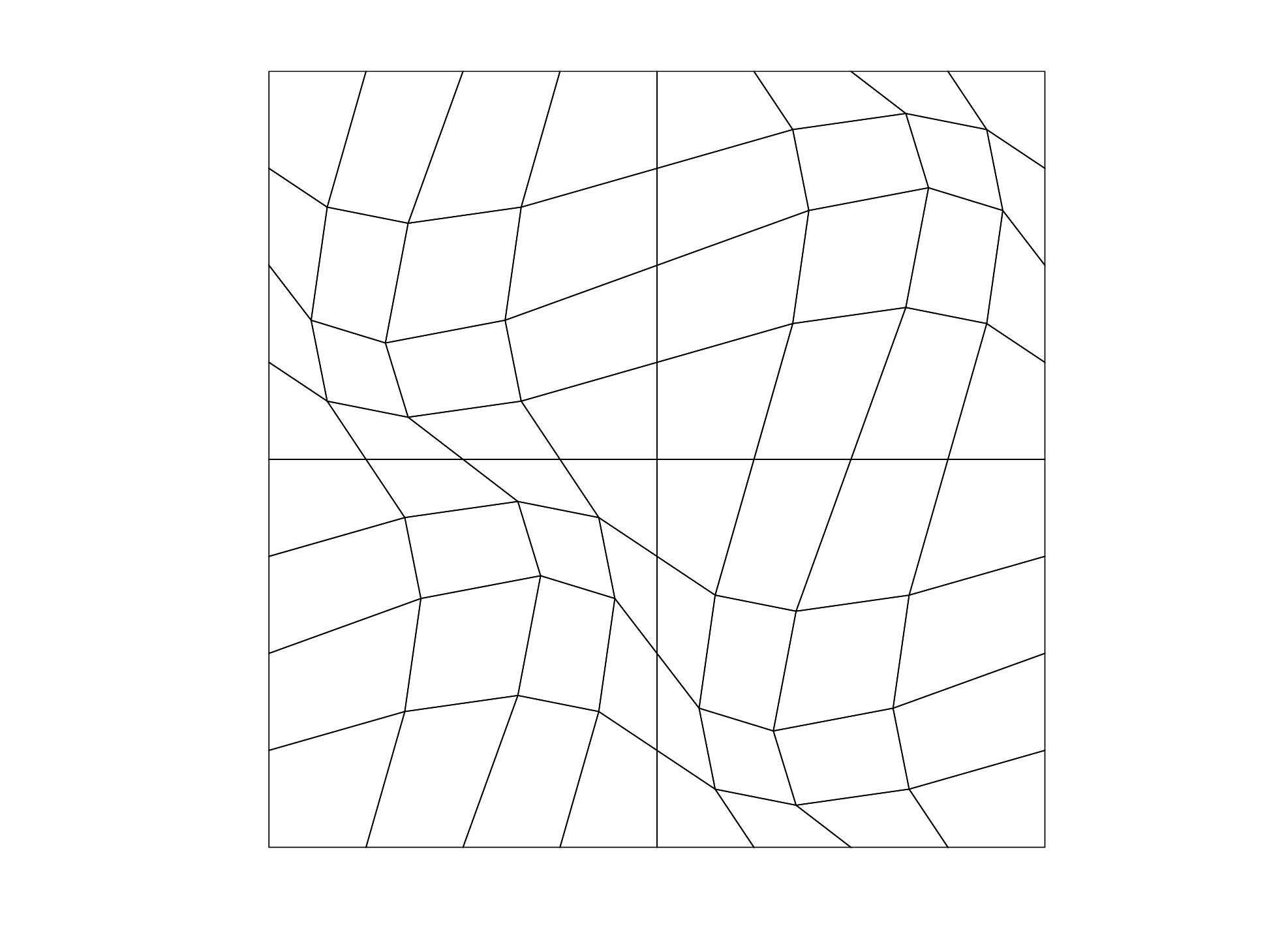}}
	\subfigure[Voronoi polygons]
	{\includegraphics[scale=0.26]{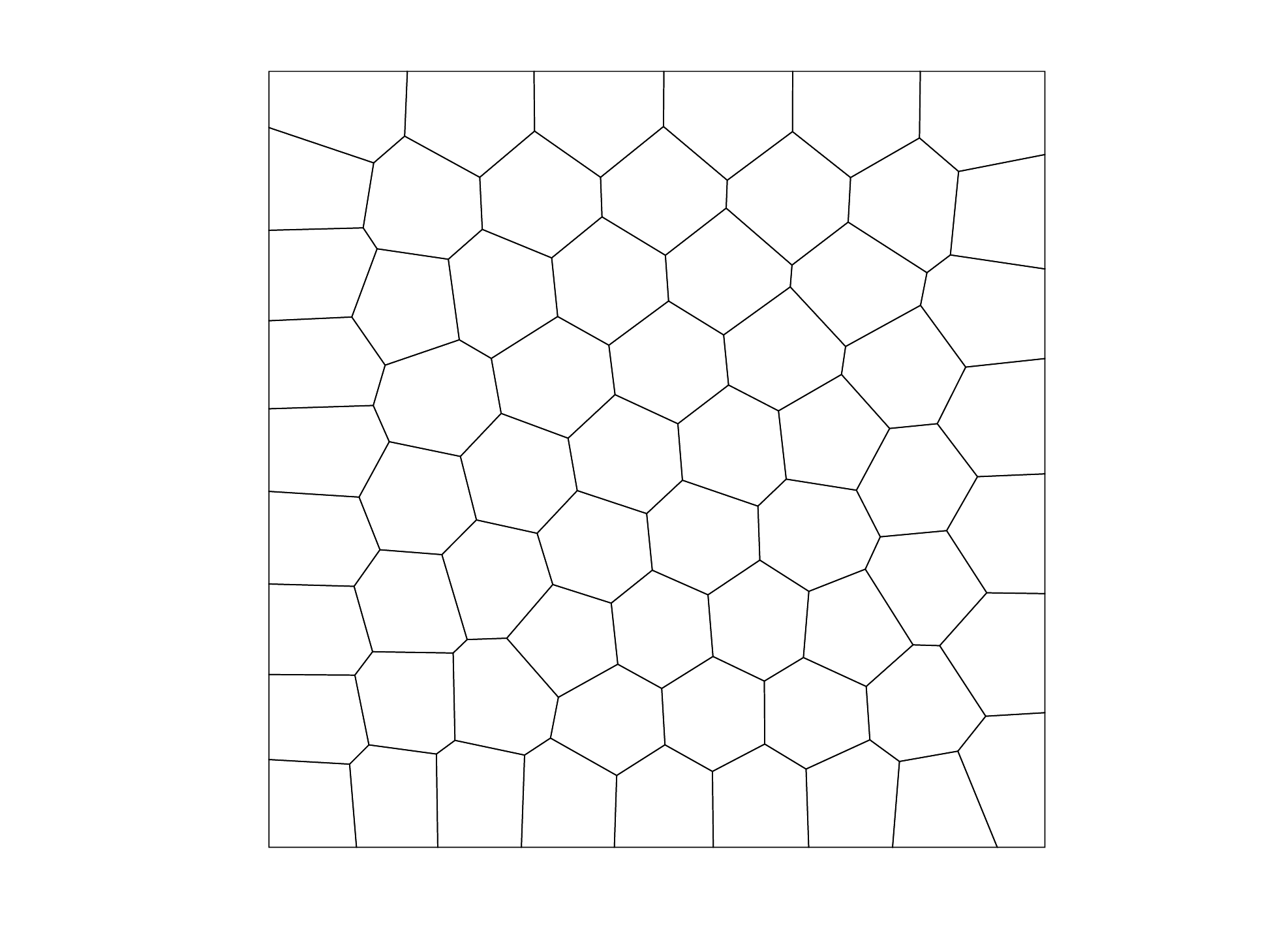}}
	\caption{Three families of meshes in Test 1.} 	
	\label{Fig4_Test1a_Meshes}
\end{figure}

{\bf Case 1:} We first consider the convergence with respect to different mesh shapes.
To do so, we choose three different sequences of meshes, labeled by $\mathcal{M}_1$, $\mathcal{M}_2$, $\mathcal{M}_3$,
as shown in Fig. \ref{Fig4_Test1a_Meshes}, where
$\mathcal{M}_1$ is a sequence of meshes composed of unstructured triangles,
$\mathcal{M}_2$ is a sequence of meshes composed of distorted squares,
and $\mathcal{M}_3$ is a sequence of meshes composed of Voronoi polygons.
The distorted meshes are obtained by mapping the position $(\xi,\,\zeta)$ of equal squares meshes
through the smooth coordinate transformation
\begin{align*}
x_1 = \xi + \frac{1}{10}\sin{(2\pi\xi)}\sin{(2\pi\zeta)},
\quad
x_2 = \zeta + \frac{1}{10}\sin{(2\pi\xi)}\sin{(2\pi\zeta)}.
\end{align*}
And the polygonal meshes are generated by the MATLAB toolbox - PolyMesher introduced in \cite{Talischi-Paulino-Pereira-2012}.

Let $\mu=1$ and $\lambda=10^6$. We display the nodal values of the elliptic projection $\Pi_1\bb{u}_h$ on $\mathcal{M}_3$
in Fig. \ref{Fig5_Test1a_SolutionInfo}, from which we observe that the numerical solutions are well matched with the exact ones.

\begin{figure}[H]
	\centering
	\includegraphics[scale=0.5]{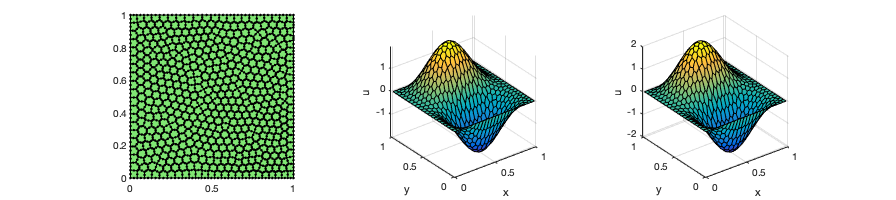} 
	\caption{Numerical and exact solutions for $\lambda=10^6$ on a polygonal mesh of $\mathcal{M}_3$ in Test 1.
	(left: Mesh512, middle: numerical solution, right: exact solution)
	}
	\label{Fig5_Test1a_SolutionInfo}
\end{figure}

The $H^1$ and $L^2$ errors computed as functions of mesh size $h$ for the mesh sequences $\{\mathcal{M}_i\}_{i=1}^3$
are depicted in Fig. \ref{Fig6_Test1a_ConvRates} as a log-log plot.
For each fixed pair $(\lambda, \mu)$, the convergence rate with respect to $h$ is estimated by assuming ${\rm Err}(h)=ch^{\alpha}$
and  computing a least squares fit to this log-linear relation.
We see that the errors {\rm ErrH1} and {\rm ErrL2} converge at the optimal rate $\mathcal{O}(h^1)$ and $\mathcal{O}(h^2)$
respectively, and are not affected by the shape of the mesh discretization.
It is worth noting that, the proposed VEM shows a satisfactory stability with respect to the nearly incompressible case
($\lambda=10^6$).

\begin{figure}[!htb]
	\centering
	\subfigure[Unstructued triangles]
	{\includegraphics[scale=0.26]{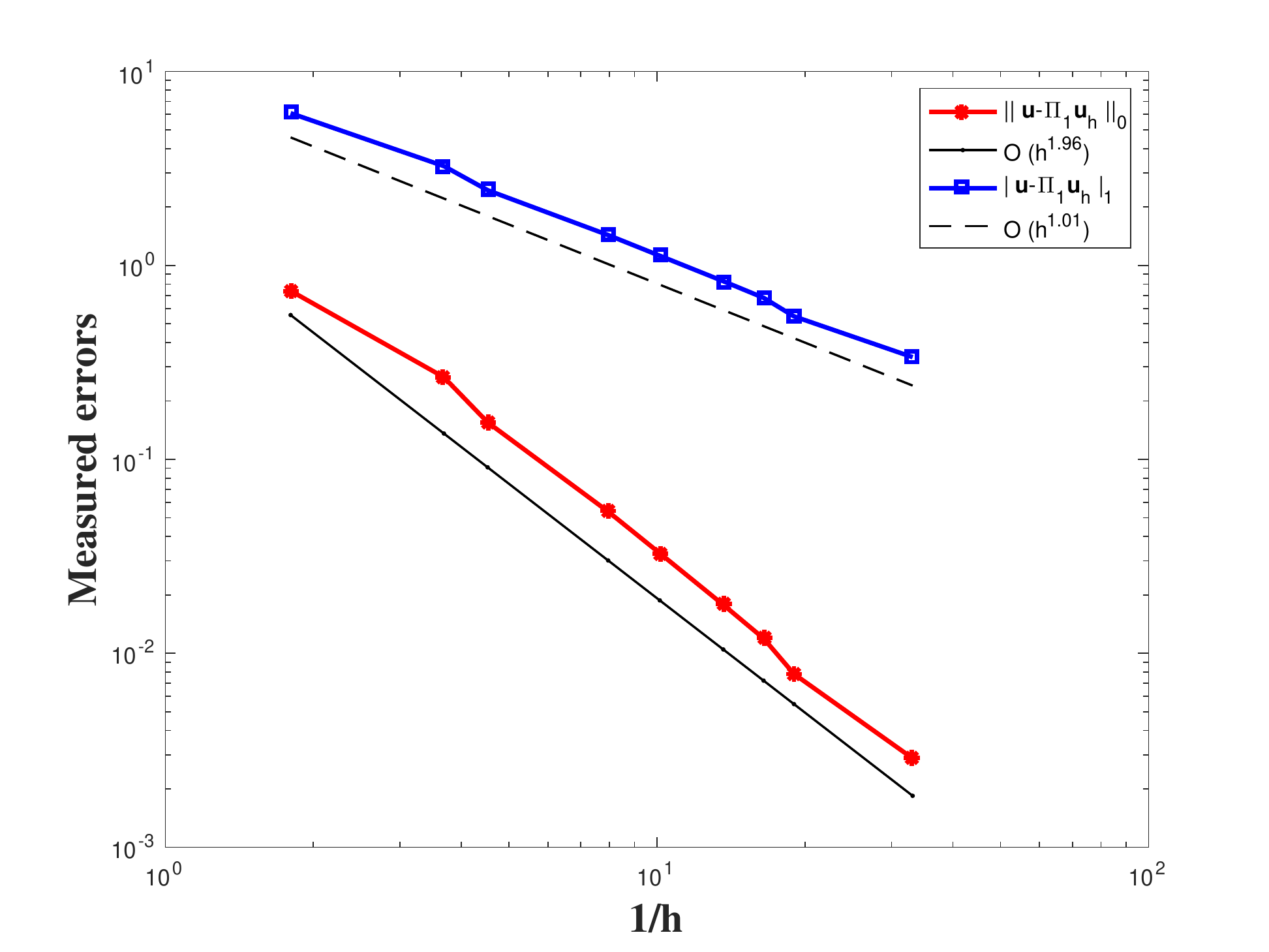}}
	\subfigure[Distorted squares]
	{\includegraphics[scale=0.26]{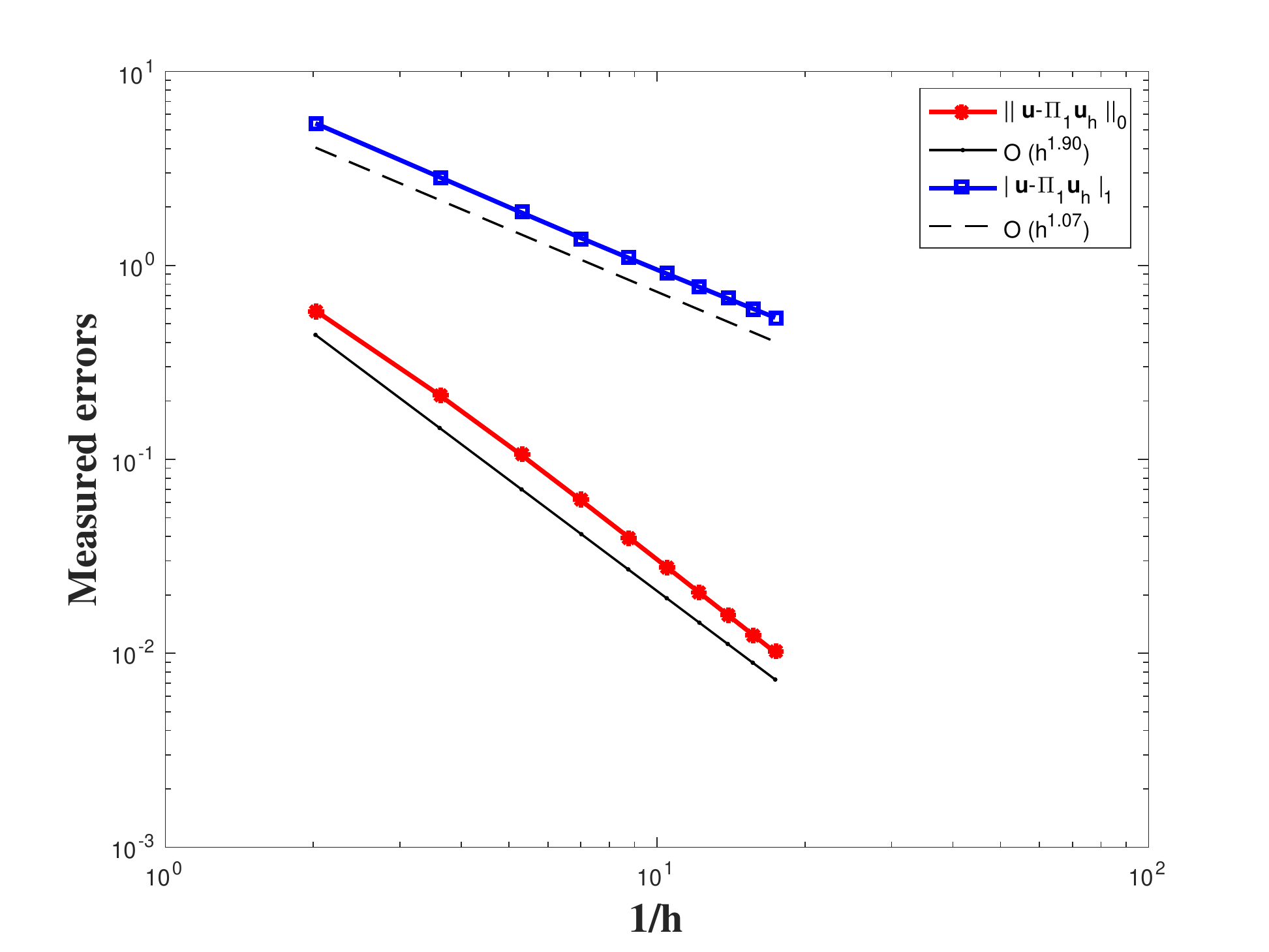}}
	\subfigure[Voronoi polygons]
	{\includegraphics[scale=0.26]{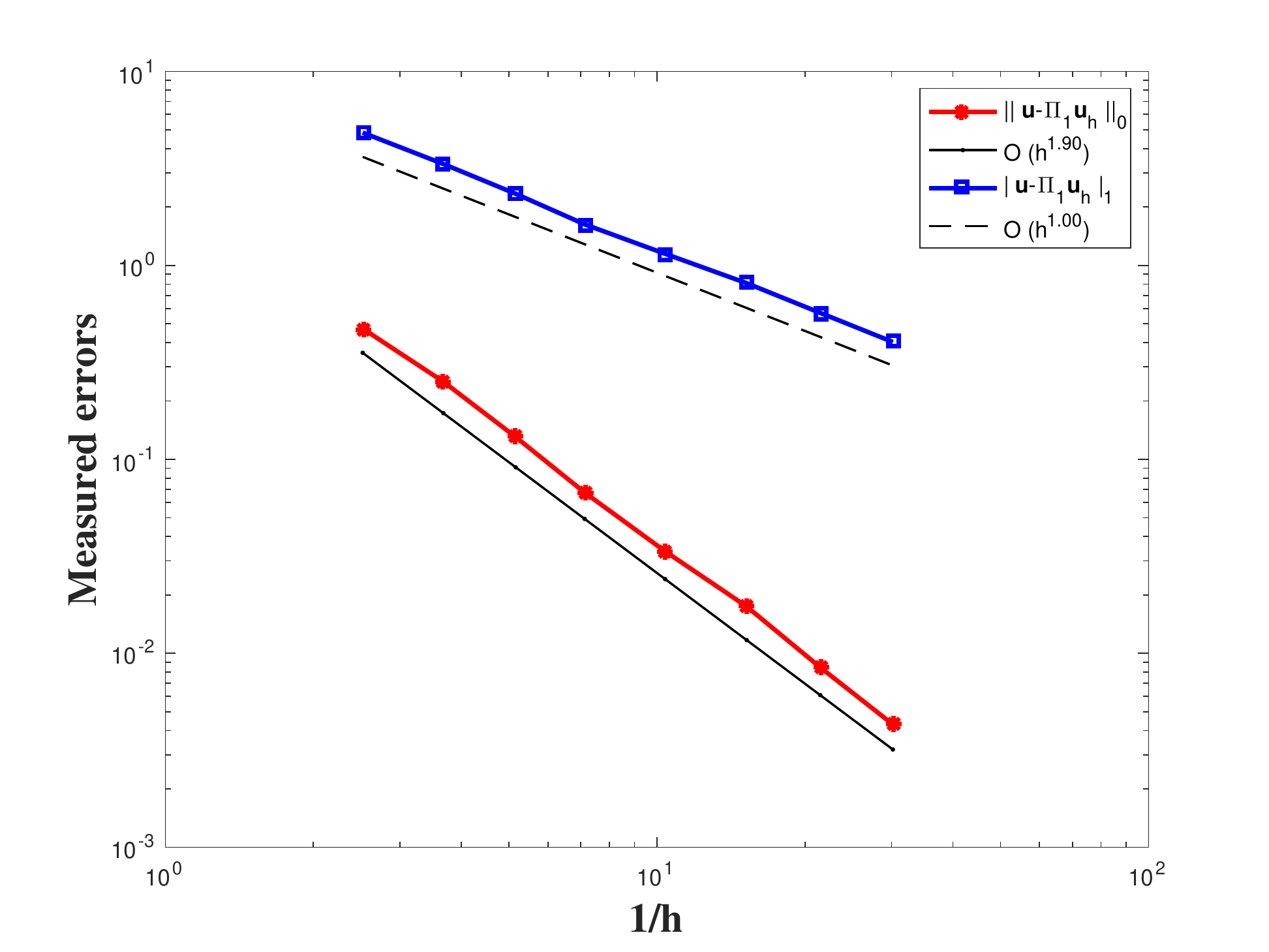}}
	\caption{$L^2$ and $H^1$ errors for $\lambda=10^6$ on three different sequences of meshes in Test 1.
		}
	\label{Fig6_Test1a_ConvRates}
\end{figure}

\par

{\bf Case 2:} 
We next investigate the locking-free property of the proposed VEM, i.e., the robustness with respect to the choice of $\lambda$.
In this case, we only focus on considering the Voronoi polygonal meshes $\mathcal{M}_3$.

Let $\mu=1$ and $\lambda\in\{1,\,10^4,\,10^7\}$. The convergence graphs of the errors {\rm ErrH1} and {\rm ErrL2}
for different values of $\lambda$ are displayed in Fig. \ref{Fig7_Test1b_Lambda_ConvRates},
and the corresponding errors {\rm ErrH1} and {\rm ErrL2} are shown in Tab. \ref{Tab1_Errors_Lambda_Choices_1e047}.
We observe that all the errors are almost the same and the method converges with the optimal rates,
which is in well agreement with the theoretical predictions in Theorems \ref{Thm2_Elasticity_Eq_VEM_H1_Est}
and \ref{Thm3_Elasticity_Eq_VEM_L2_Est}.
\begin{table}[!ht]
	\small
	\setlength{\belowcaptionskip}{6pt}
	\renewcommand{\arraystretch}{1.0}
	\centering	
	\caption{$H^1$ and $L^2$ errors for $\lambda\in\{1,\,10^4,\,10^7\}$ on polygonal meshes $\mathcal{M}_3$ in Test 1.}
	\label{Tab1_Errors_Lambda_Choices_1e047}
	\begin{tabular}{cccccccc}\hline
		Mesh
		& Choices of $\lambda$
		&\multicolumn{2}{c}{$\lambda=1$}
		&\multicolumn{2}{c}{$\lambda=10^4$}
		&\multicolumn{2}{c}{$\lambda=10^7$}
		\\ 
		h & {\rm $\#$DOFs}
		& {\rm ErrH1}  & {\rm ErrL2} & {\rm ErrH1}  & {\rm ErrL2} & {\rm ErrH1}  & {\rm ErrL2}
		\\ \hline
		0.396819 & 166
		& 4.859e+0 & 4.708e-1 & 4.833e+0 & 4.722e-1 &  4.833e+0 & 4.722e-1
		\\
		0.272025 & 326
		& 3.345e+0 & 2.494e-1 & 3.331e+0 & 2.515e-1 & 3.331e+0 & 2.515e-1
		\\
		0.193715 & 646
		& 2.352e+0 & 1.302e-1 & 2.337e+0 & 1.302e-1 & 2.337e+0 & 1.302e-1
		\\
		0.140331 & 1278
		& 1.635e+0 & 6.699e-2 & 1.628e+0 & 6.748e-2 & 1.628e+0 & 6.748e-2
		\\
		0.096262 & 2530
		& 1.155e+0 & 3.353e-2 & 1.147e+0 & 3.357e-2 & 1.147e+0 & 3.357e-2
		\\
		0.065690 & 5066
		& 8.148e-1 & 1.742e-2 & 8.092e-1 & 1.743e-2 & 8.092e-1 & 1.743e-2
		\\
		0.046554 & 10186
		& 5.719e-1 & 8.468e-3 & 5.690e-1 & 8.473e-3 & 5.690e-1 & 8.473e-3
		\\
		0.033120 & 20326
		& 4.047e-1 & 4.306e-3 & 4.025e-1 & 4.300e-3 & 4.025e-1 & 4.300e-3
		\\ \hline
	\end{tabular}
\end{table}

\begin{figure}[H]
	\centering
	\subfigure[$\lambda=1$]
	{\includegraphics[scale=0.26]{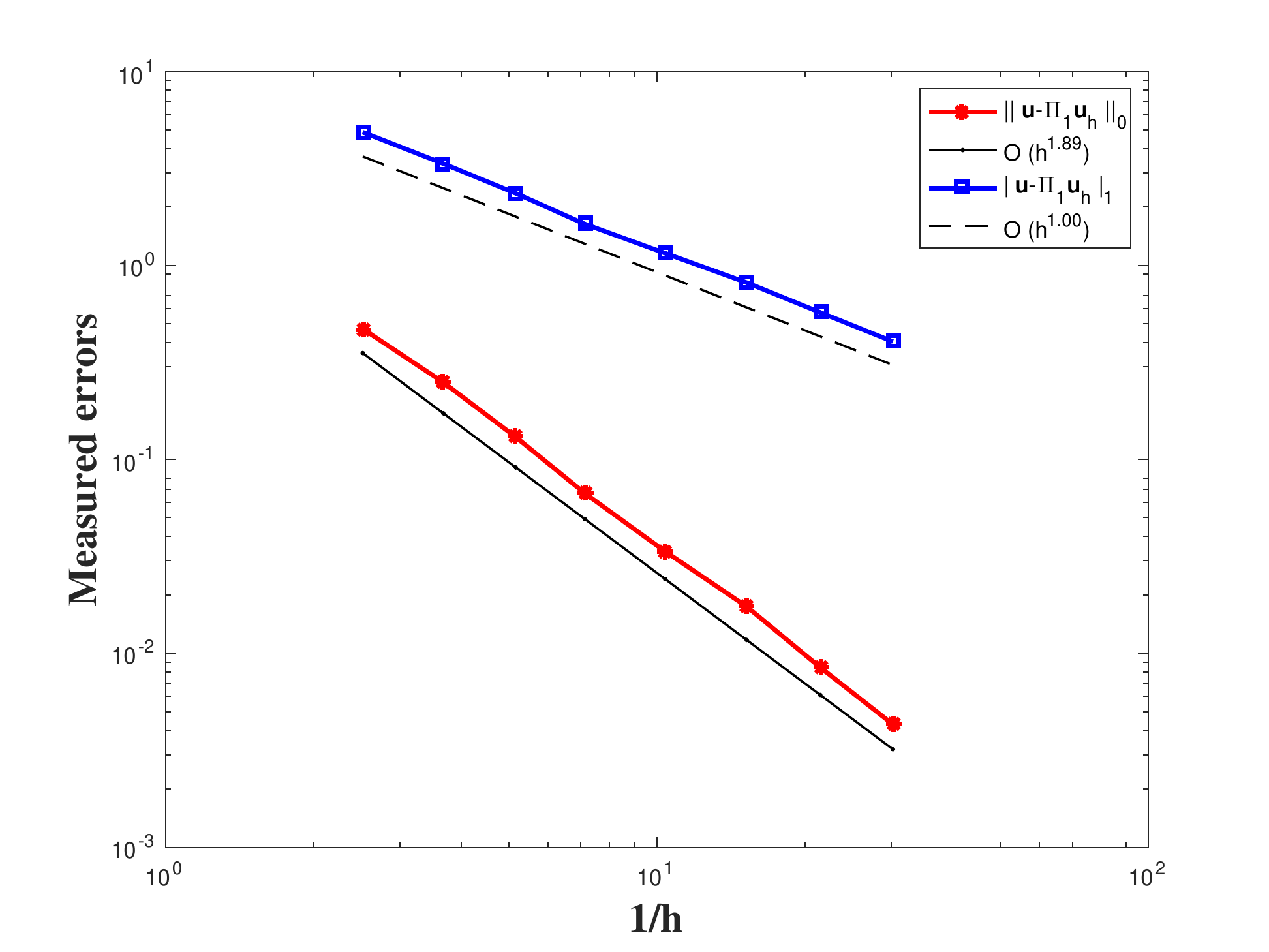}}
	\subfigure[$\lambda=10^4$]
	{\includegraphics[scale=0.26]{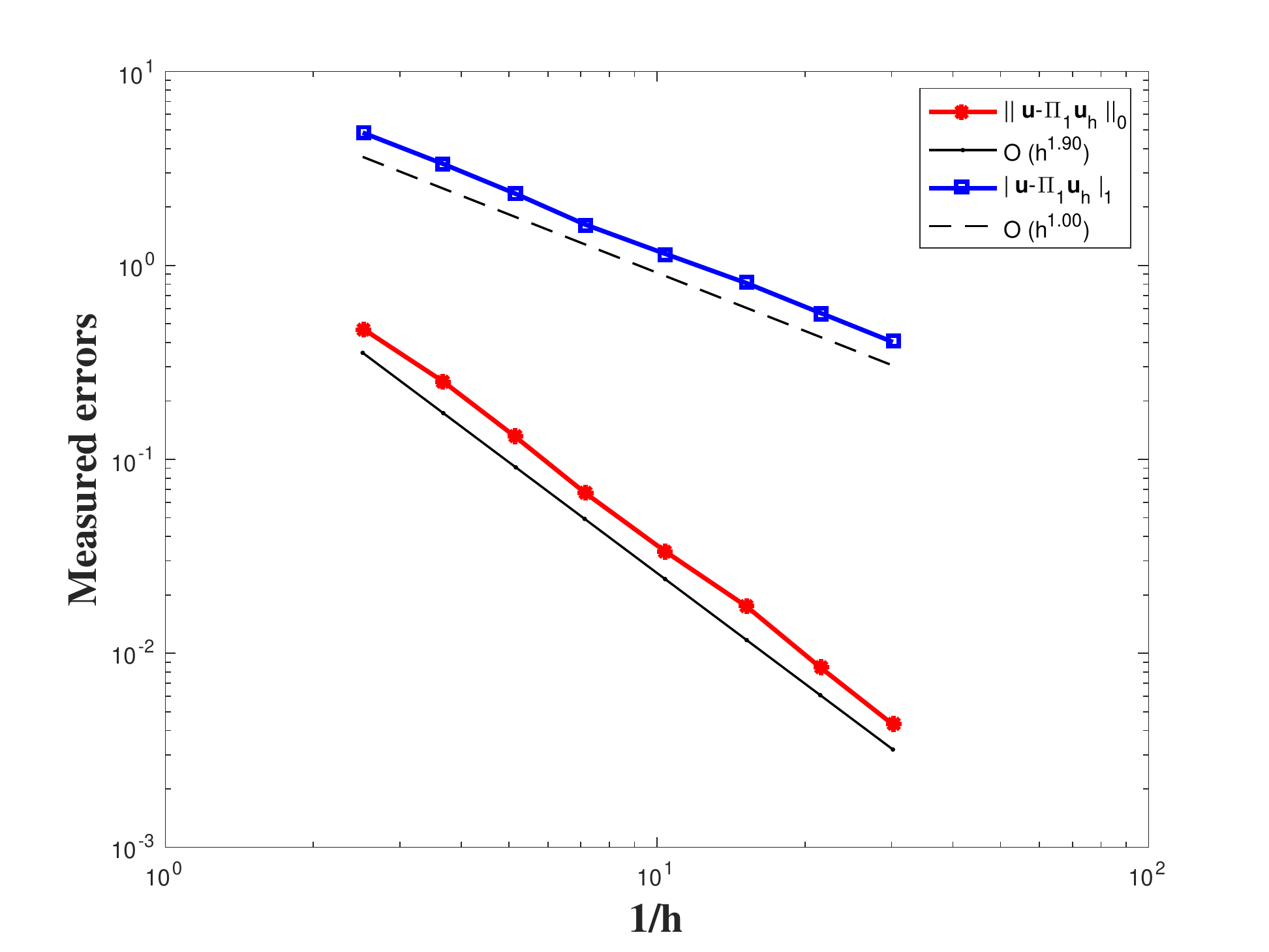}}
	\subfigure[$\lambda=10^7$]
	{\includegraphics[scale=0.26]{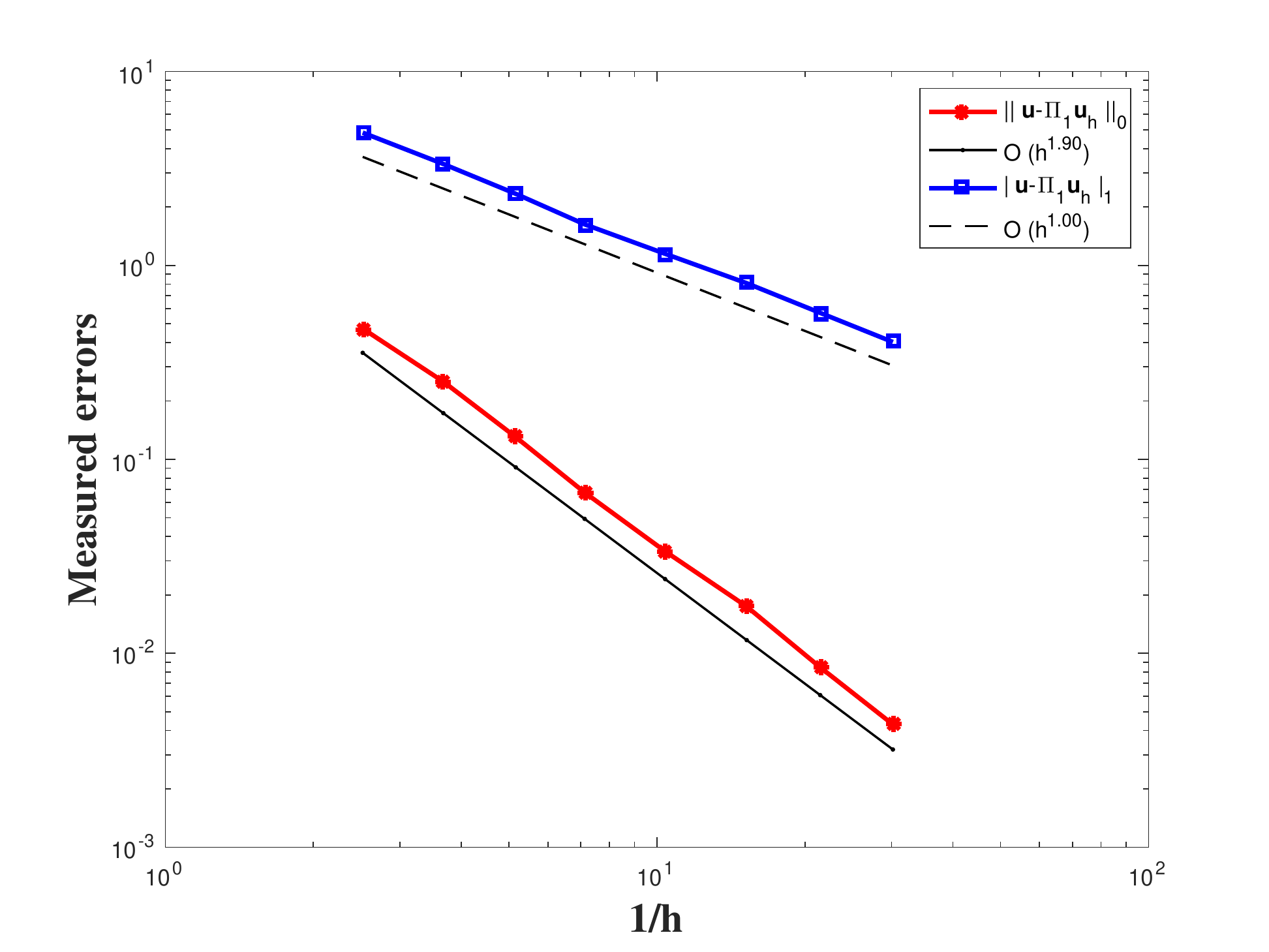}}
	\caption{$L^2$ and $H^1$ errors for $\lambda\in\{1,\,10^4,\,10^7\}$ on polygonal meshes $\mathcal{M}_3$ in Test 1.
	}
	\label{Fig7_Test1b_Lambda_ConvRates}
\end{figure}

Furthermore, we take $\lambda\in\{10^j,j=0,1,\cdots,7\}$ on a fixed mesh in $\mathcal{M}_3$ with $h=0.0656898$.
As shown in Tab. \ref{Tab2_Errors_Lambda_Mesh512},
the errors {\rm ErrH1} and {\rm ErrL2} are insensitive to the choice of $\lambda$,
thus the proposed VEM is robust with respect to $\lambda$, i.e., it is volume locking-free.
\begin{table}[!ht]
	\setlength{\belowcaptionskip}{6pt}
	\renewcommand{\arraystretch}{1.0}
	\centering	
	\caption{$H^1$ and $L^2$ errors for different values of $\lambda$ on one fixed mesh of $\mathcal{M}_3$ in Test 1.}
	\label{Tab2_Errors_Lambda_Mesh512}
	\begin{tabular}{ccccccccc}
		\hline
		${\rm Errors}\setminus\lambda$
		& $1$ & $10$ & $10^2$ & $10^3$ & $10^4$ & $10^5$ & $10^6$ & $10^7$
		\\
		\hline
		{\rm ErrH1}	
		& 8.148e-1  & 8.096e-1  & 8.092e-1
		& 8.092e-1  & 8.092e-1  & 8.092e-1
		& 8.092e-1  & 8.092e-1
		\\
		{\rm ErrL2}	
		& 1.742e-2  & 1.741e-2  & 1.743e-2
		& 1.743e-2  & 1.743e-2  & 1.743e-2
		& 1.743e-2  & 1.743e-2
		\\
		\hline
	\end{tabular}
\end{table}

\subsection{The mixed boundary conditions}
\label{Sec7-3}

{\bf Test 2:} We solve the following linear elasticity problem on $\Omega=(0,1)^2$ with general boundary conditions:
\begin{equation}
\label{Elasticity_Eq_Gen_Bds}
\begin{cases}
- {\mathbf{div}}~\bb{\sigma}(\bb{u}) = \bb{f} & \text{in} ~~\Omega, \\
\bb{u}  = \bb{g}_1  & \text{on} ~~\Gamma_1. \\
\bb{\sigma}(\bb{u}) \bb{n}  = \bb{g}_2  & \text{on} ~~\Gamma_2,
\end{cases}
\end{equation}
where, $\Gamma_2=\big\{(x_1,\,x_2) ~|~ 0\leq{x}_1\leq1, x_2=0\big\}$ and $\Gamma_1=\partial\Omega\backslash\Gamma_2$.
We select the load term $\bb{f}$ and the boundary conditions such that the analytical solution of problem
\eqref{Elasticity_Eq_Gen_Bds} (cf. \cite{Wang-Wang-Wang-Zhang-2016}) is
\begin{align*}
\bb{u}(x_1,\,x_2) =
\begin{pmatrix}
\sin{(x_1)}\sin{(x_2)} \\
\cos{(x_1)}\cos{(x_2)}
\end{pmatrix}
+\frac{1}{\lambda}
\begin{pmatrix}
x_1 \\
x_2
\end{pmatrix}.
\end{align*}
Here the first term is divergence-free, but the second term provides a constant divergence $(2/\lambda)$.
It is clear that ${\rm div~}\boldsymbol{u}=0$ as $\lambda$ tends to infinity.

We take $\mu=0.5$ and $\lambda\in\{1,\,10^4,\,10^7,10^{10}\}$ on Voronoi polygonal meshes $\mathcal{M}_3$.
The nodal values of the elliptic projection $\Pi_1\bb{u}_h$ for $\lambda=10^{10}$ are shown in Fig. \ref{Fig8_Test2a_SolutionInfo}.
The results and convergence graph of the two errors {\rm ErrH1} and {\rm ErrL2} versus the mesh size $h$ for different choice of $\lambda$ are illustrated
in Tabs. \ref{Tab3_Test2a_Lambda_ErrH1}-\ref{Tab4_Test2a_Lambda_ErrL2} and Fig. \ref{Fig9_Test2a_Lambda_ConvRates}, respectively.
As observed, the optimal rates of convergence are achieved for both cases.

\begin{figure}[H]
	\centering
	\includegraphics[scale=0.5]{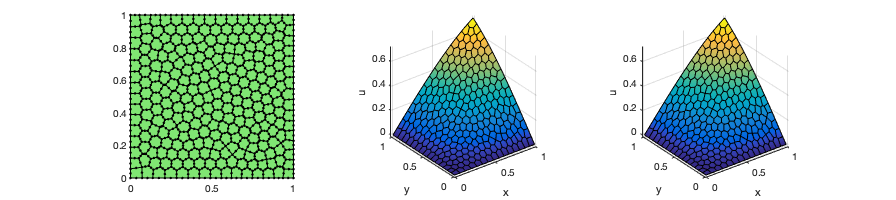}
	\caption{Numerical and exact solutions for $\lambda=10^{10}$ on a polygonal mesh of $\mathcal{M}_3$ in Test 2.
	(left: Mesh512, middle: numerical solution, right: exact solution)
	}
	\label{Fig8_Test2a_SolutionInfo}
\end{figure}

\begin{table}[H]
	\small
	\setlength{\belowcaptionskip}{6pt}
	\renewcommand{\arraystretch}{0.9}
	\centering	
	\caption{Convergence rate w.r.t. $H^1$ norm for $\lambda\in\{1,\,10^4,\,10^7,10^{10}\}$ on meshes $\mathcal{M}_3$ in Test 2.}
	\label{Tab3_Test2a_Lambda_ErrH1}
	\begin{tabular}{cccccccccc}
		\hline
		$\lambda\setminus{h}$
		& 0.396819  & 0.275890 & 0.193715 & 0.140331 & 0.096262 & 0.065690 & 0.046554 & 0.033120 & {\rm Rate}
		\\
		\hline
		$1$	
		& 1.076e-1  & 7.381e-2  & 5.238e-2
		& 3.727e-2  & 2.605e-2  & 1.847e-2
		& 1.297e-2  & 9.158e-3  & 0.99
		\\ 
		$10^4$	
		& 1.079e-1  & 7.386e-2  & 5.235e-2
		& 3.726e-2  & 2.604e-2  & 1.846e-2
		& 1.297e-2  & 9.155e-3  & 0.99
		\\ 
		$10^7$	
		& 1.079e-1  & 7.386e-2  & 5.235e-2
		& 3.726e-2  & 2.604e-2  & 1.846e-2
		& 1.297e-2  & 9.155e-3  & 0.99
		\\ 
		$10^{10}$	
		& 1.079e-1  & 7.386e-2  & 5.235e-2
		& 3.726e-2  & 2.604e-2  & 1.846e-2
		& 1.297e-2  & 9.155e-3  & 0.99
		\\
		\hline
	\end{tabular}
\end{table}

\begin{table}[H]
	\small
	\setlength{\belowcaptionskip}{6pt}
	\renewcommand{\arraystretch}{0.9}
	\centering	
	\caption{Convergence rate w.r.t. $L^2$ norm for $\lambda\in\{1,\,10^4,\,10^7,10^{10}\}$ on meshes $\mathcal{M}_3$ in Test 2.}
	\label{Tab4_Test2a_Lambda_ErrL2}
	\begin{tabular}{cccccccccc}
		\hline
		$\lambda\setminus{h}$
		& 0.396819  & 0.275890 & 0.193715 & 0.140331 & 0.096262 & 0.065690 & 0.046554 & 0.033120 & {\rm Rate}
		\\
		\hline
		$1$	
		& 6.188e-3  & 2.739e-3  & 1.311e-3
		& 6.473e-4  & 3.094e-4  & 1.518e-4
		& 7.392e-5  & 3.681e-5  & 2.05
		\\ 
		$10^4$	
		& 6.113e-3  & 2.759e-3  & 1.350e-3
		& 6.775e-4  & 3.305e-4  & 1.617e-4
		& 8.005e-5  & 4.015e-5  & 2.01
		\\ 
		$10^7$	
		& 6.113e-3  & 2.759e-3  & 1.350e-3
		& 6.775e-4  & 3.305e-4  & 1.617e-4
		& 8.005e-5  & 4.015e-5  & 2.01
		\\ 
		$10^{10}$	
		& 6.113e-3  & 2.759e-3  & 1.350e-3
		& 6.775e-4  & 3.305e-4  & 1.618e-4
		& 8.015e-5  & 4.021e-5  & 2.01
		\\
		\hline
	\end{tabular}
\end{table}

\begin{figure}[H]
	\centering
		\subfigure[$\lambda=1$]
	{\includegraphics[scale=0.37]{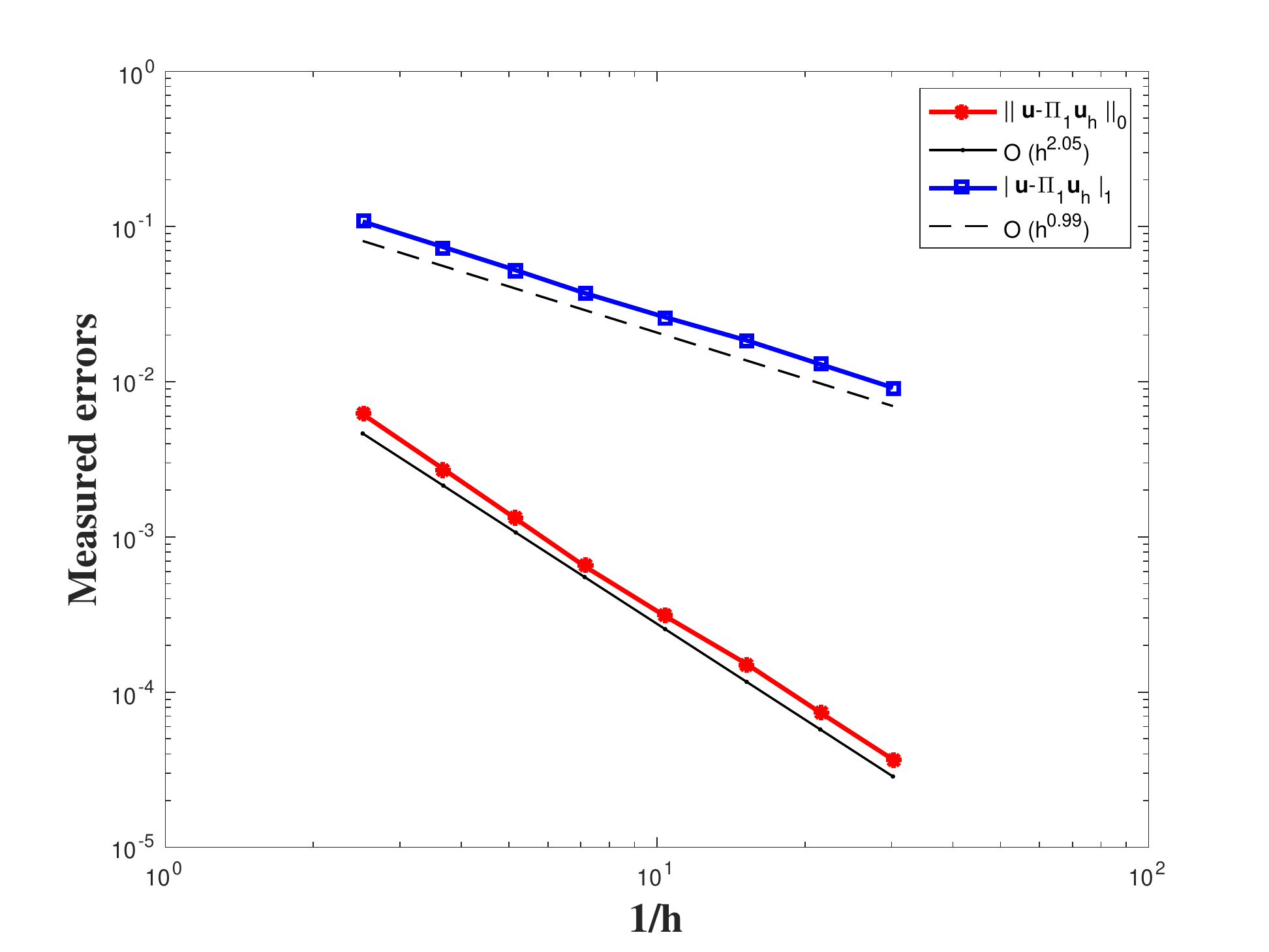}}
	\subfigure[$\lambda=10^4$]
	{\includegraphics[scale=0.37]{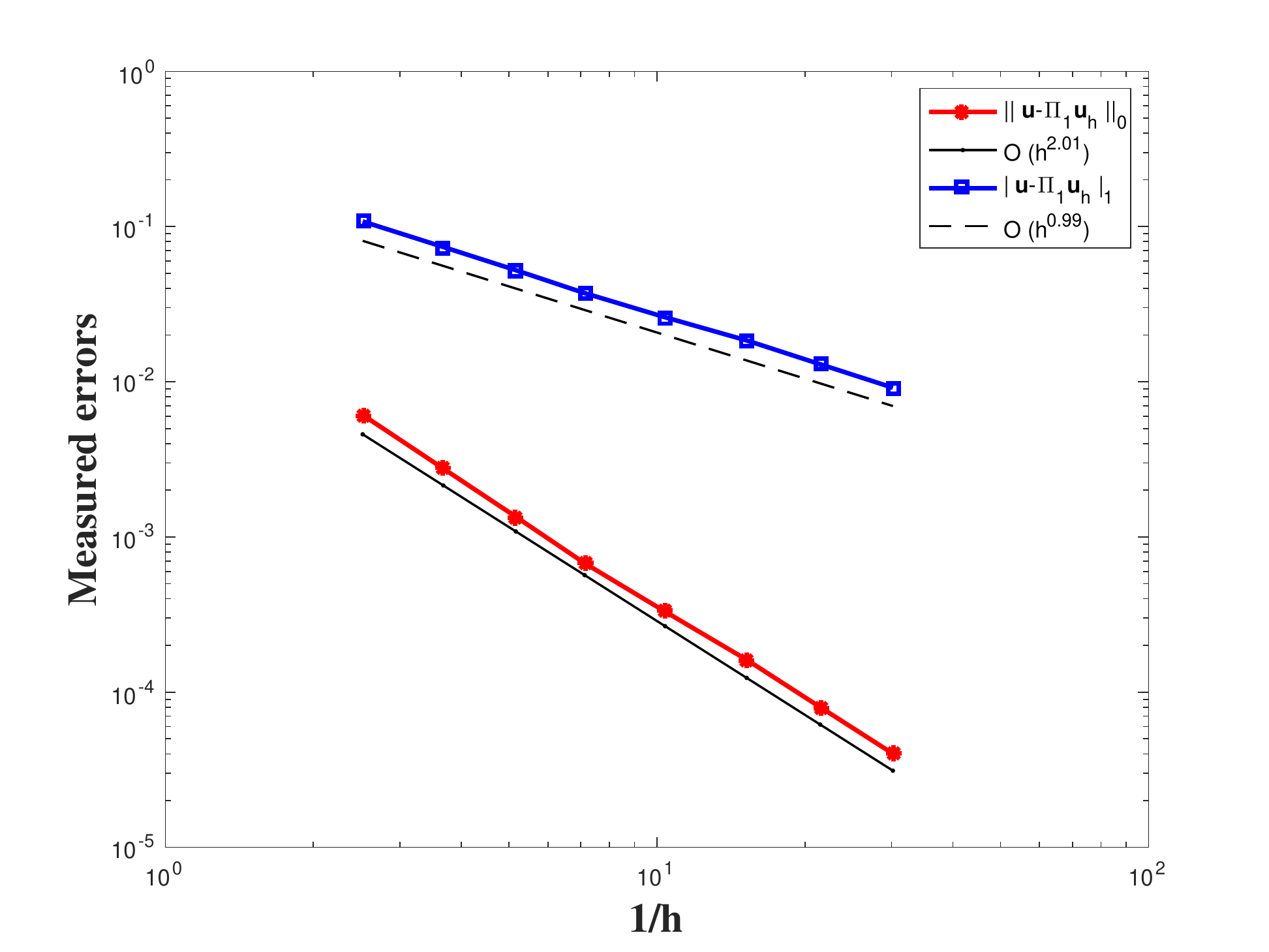}} \\
	\subfigure[$\lambda=10^7$]
	{\includegraphics[scale=0.37]{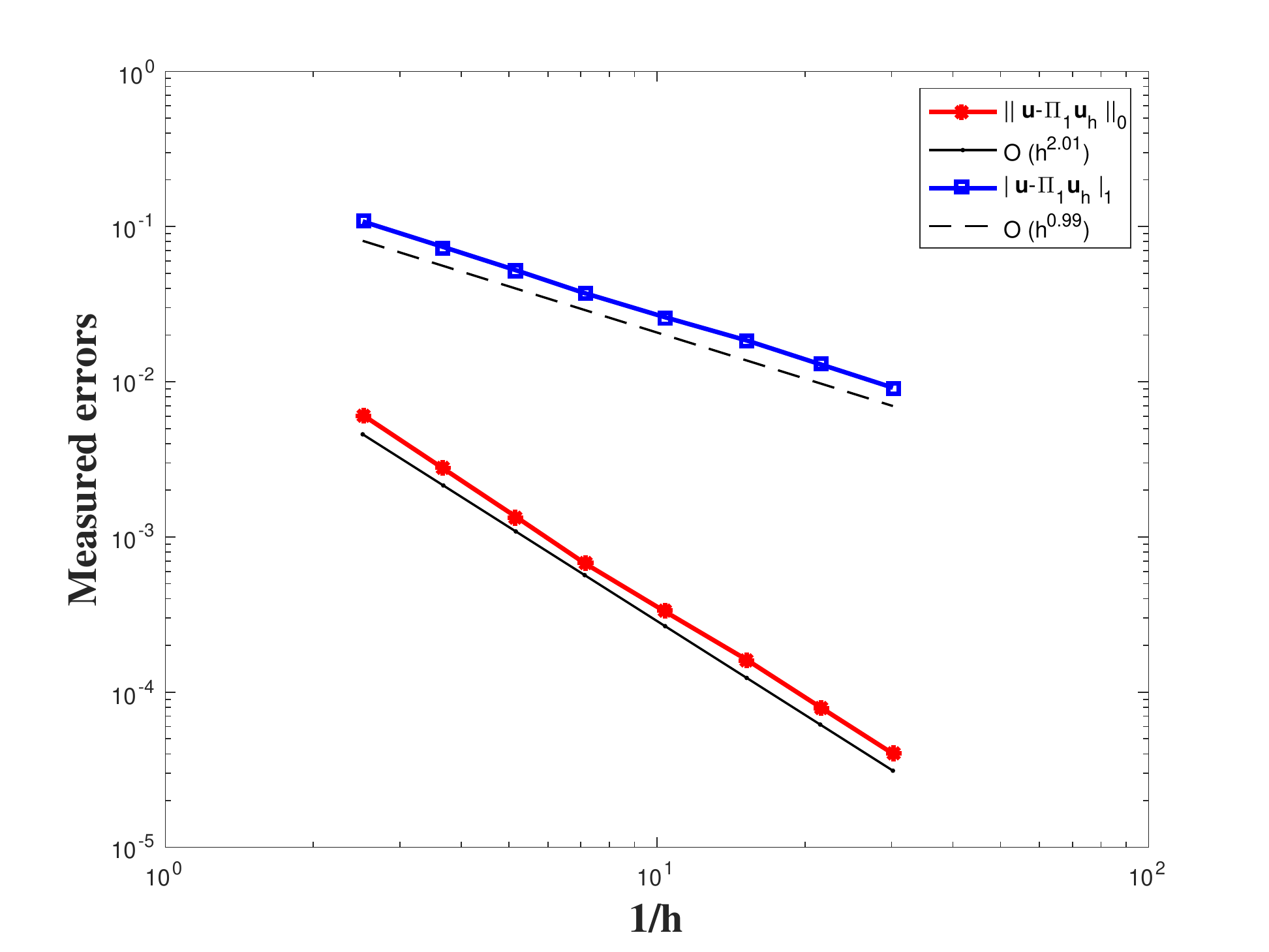}}
	\subfigure[$\lambda=10^{10}$]
	{\includegraphics[scale=0.37]{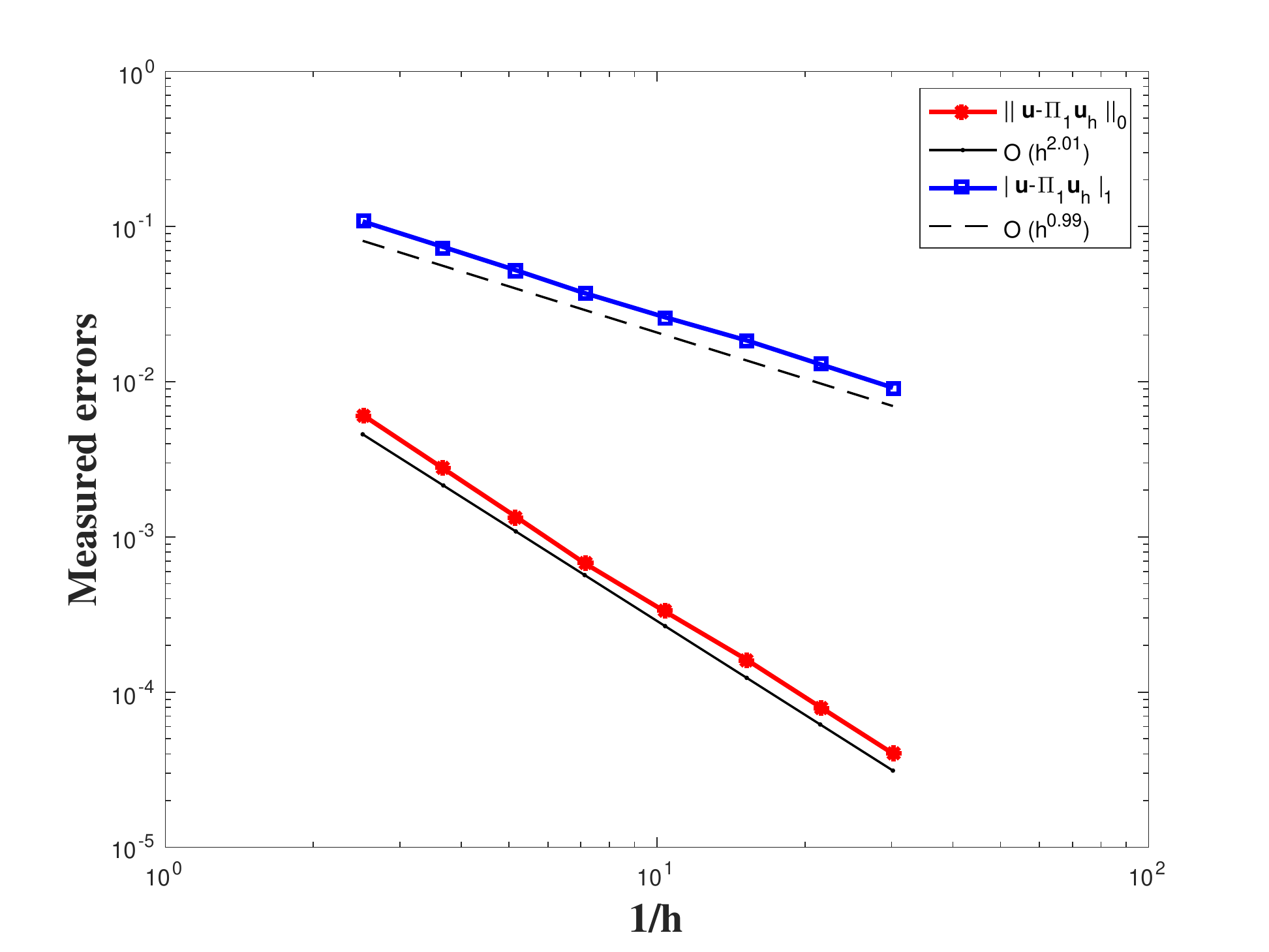}}
	\caption{$L^2$ and $H^1$ errors for $\lambda\in\{1,\,10^4,\,10^7,10^{10}\}$ on polygonal meshes $\mathcal{M}_3$ in Test 2.
	}
	\label{Fig9_Test2a_Lambda_ConvRates}
\end{figure}

\subsection{The completely incompressible limiting problem}
\label{Sec7-4}
{\bf Test 3:} Finally, we consider the completely incompressible problem with the exact solution given by
\begin{align*}
\bb{u}(x_1,\,x_2) =
\begin{pmatrix}
-{\sin}^3{(\pi{x_1})}{\sin}{(2\pi{x_2})} {\sin}{(\pi{x_2})} \\
{\sin}{(2\pi{x_1})}{\sin}{(\pi{x_1})} {\sin}^3{(\pi{x_2})}
\end{pmatrix}.
\end{align*}
It's easy to check that ${\rm div}~\boldsymbol{u}=0$ and $\boldsymbol{f}$ is independent of $\lambda$.
Now we investigate the impact of Lam\'{e} constant $\lambda$ on two different sequences of meshes
composed by Voronoi and distortion polygons, respectively.
Let $\mu=1$ and $\lambda=10^{10}$.

\begin{figure}[H]
	\centering
		\includegraphics[scale=0.5]{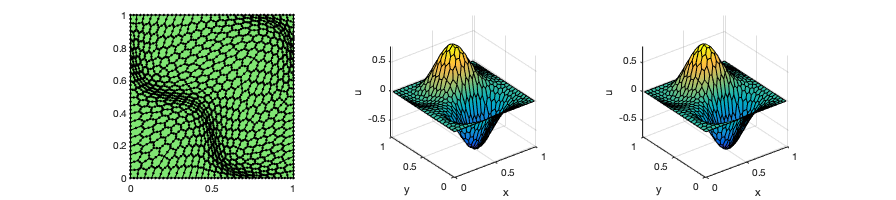} 
	\caption{Numerical and exact solutions for $\lambda=10^{10}$ on a distortion polygonal mesh in Test 3.
	(left: Mesh512, middle: numerical solution, right: exact solution)
	}
	\label{Fig11_Test3_SolutionInfo}
\end{figure}

The numerical solutions are well matched with the exact ones even on distortion polygonal meshes as observed
in Fig. \ref{Fig11_Test3_SolutionInfo}.
From the convergence graphs of the errors {\rm ErrH1} and {\rm ErrL2} for $\lambda=10^{10}$ on two families of polygonal meshes
displayed in Fig. \ref{Fig12_Test3_Lambda_ConvRates}, we still obtain a satisfactory result that the optimal rates of convergence are achieved
for both subdivisions in the completely incompressible limiting case.
\begin{figure}[H]
	\centering
		\subfigure[Voronoi  polygons]
	{\includegraphics[scale=0.37]{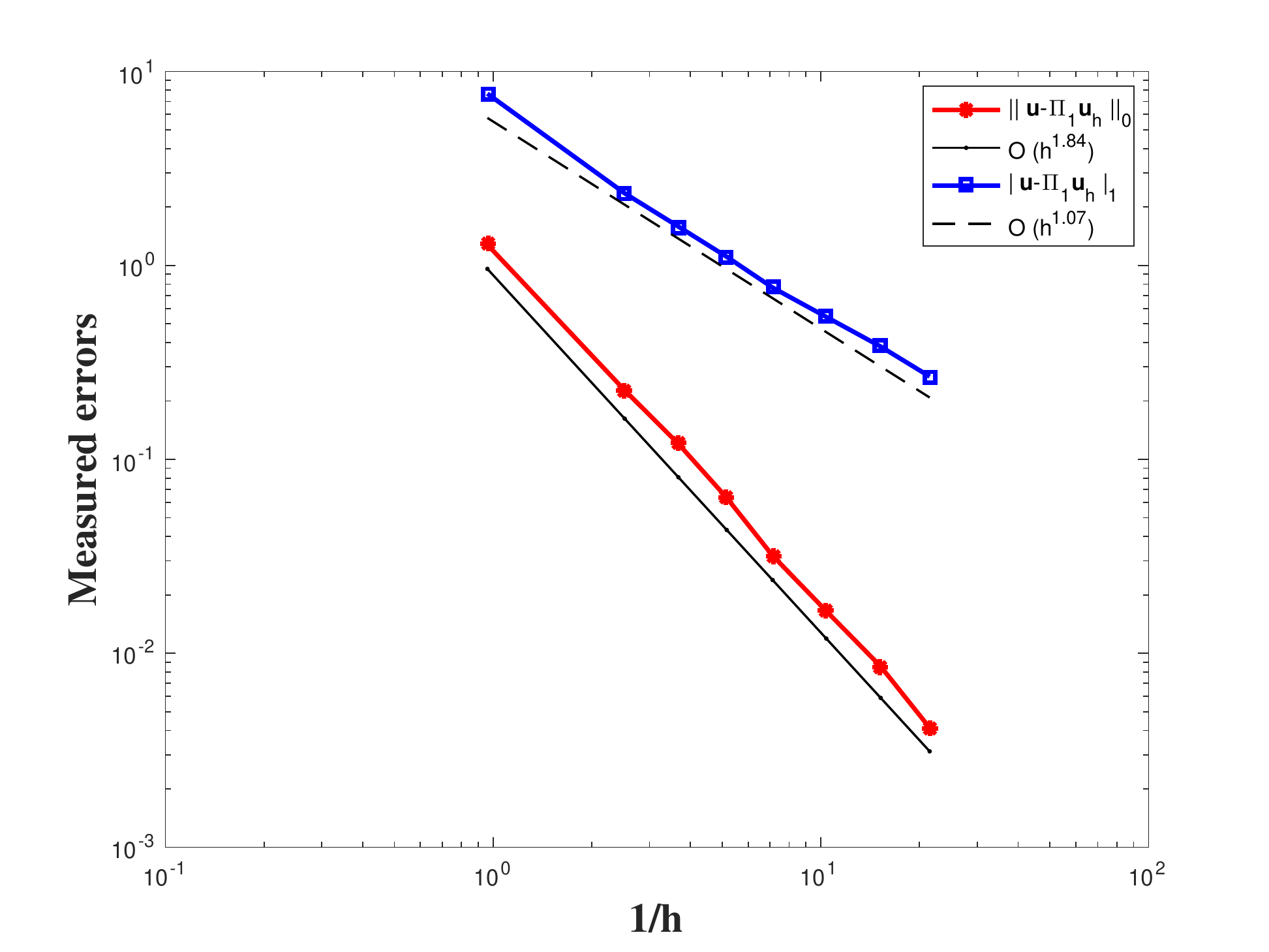}}
	\subfigure[Distortion polygons]
	{\includegraphics[scale=0.37]{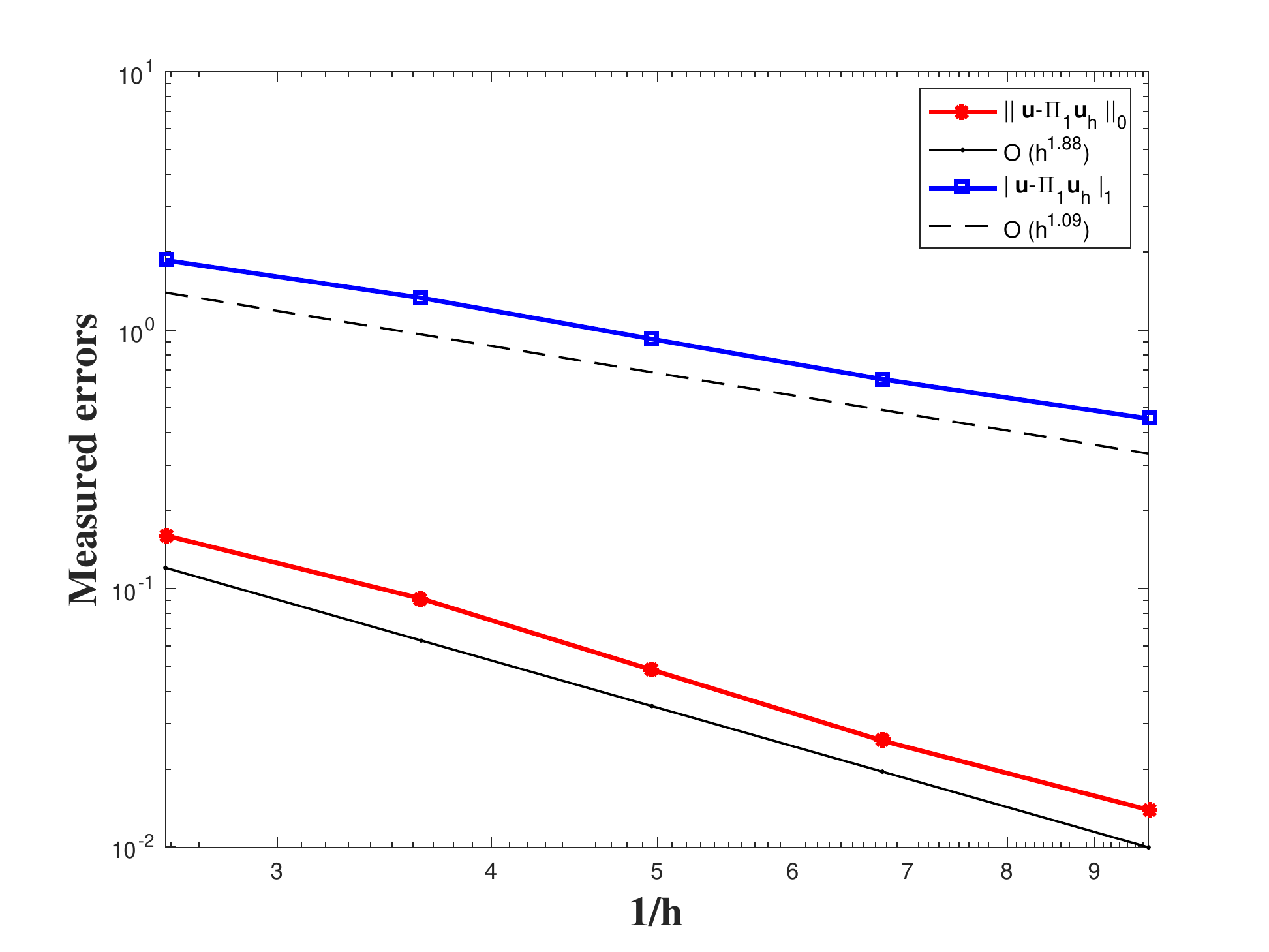}}
	\caption{$L^2$ and $H^1$ errors for $\lambda=10^{10}$ on Voronoi and distortion polygonal meshes in Test 3.
	}
	\label{Fig12_Test3_Lambda_ConvRates}
\end{figure}

\section{Conclusions}
\label{Sec8}
We propose a novel locking-free conforming VEM in the lowest order case for solving linear elasticity problems.
The key is to view the mesh $K$ as $\widetilde{K}$ which is formed by including an interior point of each edge as an additional vertex.
Then the method in \cite{Beirao-Brezzi-Marini-2013} can be extended to the case of $k=1$, if $V_1(K)$ is replaced by $V_1(\widetilde{K})$.
The parameter-free property and the optimal error estimate are established. Numerical results are consistent with theoretical findings.


\section*{References}

\end{document}